\theoremstyle{thmstyleone}%
\newtheorem{theorem}{Theorem}
\theoremstyle{thmstyletwo}%
\theoremstyle{thmstylethree}%
\newtheorem{definition}{Definition}%
\newcommand{\real}{\mathbb{R}}
\newcommand{\I}{\mathcal{I}}
\newcommand{\mc}[1]{\mathcal{#1}}
\newcommand{\od}{\text{d}}
\newcommand{\pd}{\partial}
\newcommand{\deriv}[2]{\frac{\od #1}{\od #2}}
\newcommand{\nderiv}[1]{\frac{\od}{\od #1}}
\newcommand{\pderiv}[2]{\frac{\pd #1}{\pd #2}}
\newcommand{\npderiv}[1]{\frac{\pd}{\pd #1}}
\newcommand{\nsecpderiv}[1]{\frac{\pd^2}{\pd #1 ^2}}
\newcommand{\secpderiv}[2]{\frac{\pd^2 #1}{\pd #2 ^2}}
\newcommand{\del}{\nabla}
\newcommand{\mean}[1]{\left \langle #1 \right \rangle}
\newcommand{\bg}{\mathbf{g}}
\newcommand{\bu}{\mathbf{u}}
\newcommand{\bp}{\mathbf{p}}
\newcommand{\mb}[1]{\mathbf{#1}}
\newcommand{\mnphi}[1]{\mean{\phi(#1)}}
\newcommand{\dt}{\Delta t}
\newcommand{\inner}[2]{\left(#1,#2\right)}
\newcommand{\norm}[1]{\left \lVert #1 \right \rVert}
\newcommand{\F}{\mathcal{F}}
\newcommand{\half}{\frac{1}{2}}
\newcommand{\iplus}{i+\half}
\newcommand{\iminus}{i-\half}
\newmdtheoremenv{Prob}{Problem}
\newmdtheoremenv{Alg}{Algorithm}
\begin{document}

\title[PDE-constrained optimal control of a
leader-follower opinion formation model]{PDE-constrained optimal control of a
leader-follower opinion formation model}

\author*[1]{\fnm{Bertram} \sur{D\"uring}}\email{bertram.during@warwick.ac.uk}
\author[1]{\fnm{Oliver} \sur{Wright}}\email{oliver.l.j.wright@warwick.ac.uk}

\affil[1]{\orgdiv{Mathematics Institute}, \orgname{University of Warwick}, \orgaddress{\street{Zeeman
  Building}, \city{Coventry}, \postcode{CV4 7AL}, \state{West Midlands},
\country{United Kingdom}}}

\abstract{We consider the PDE-constrained optimal control of a leader-follower
kinetic opinion formation model, with a Fokker-Planck-type
system of partial differential equations as a state constraint.
We derive the Boltzmann-type and
Fokker-Planck-type systems of equations associated with the controlled
leader-follower opinion formation model. In a function space setting we
derive first-order optimality conditions associated with the 
PDE-constrained optimal control problem, yielding an optimality system of coupled
nonlinear partial differential equations.
We employ a gradient-type sweeping algorithm
to numerically attack the
optimality system obtained from the first-order optimality
conditions. We present the results from a finite elements based
simulation for different types of interactions and cost functionals.}

\keywords{opinion formation, leaders, optimal control, Fokker–Planck-type system}

\pacs[MSC Classification]{49M41, 91D30, 35Q93}

\maketitle

\section{Introduction}\label{sec0}

We study the PDE-constrained optimal control of a leader-follower
kinetic opinion formation model from \cite{bib:During:strongleaders},
which features {\em two species}, a species of strong opinion leaders and a species of followers,
building on the works of Fornasier and Solombrino
\cite{fornasier2014mean} and Albi \textit{et al.} \cite{bib:Albi:meancontrol}.
In \cite{bib:Albi:meancontrol} the authors
consider the mean field optimal control of a partial differential
equation of continuity type, as they arise in opinion formation
problems for a large population in the mean-field limit {\em for a single species}.
They derive first-order optimality conditions and
propose a sub-optimal, but efficient numerical optimal control strategy.

The introduction of opinion formation leader-follower models in the literature in
the past two decades \cite{bertotti2008discrete,bib:During:strongleaders,bib:During:inhomogeneous} has naturally
sparked curiosity on how leaders can optimise their behaviour, e.g.\ position themselves
optimally to attract a mass of followers. Another question that can be
asked is how leaders can optimally behave to steer a follower density to
a desired target distribution.
Examples in the real world spring to mind, with political parties,
their leaders and their spin doctors, trying to attract voters,
e.g.\ by pushing certain topics where differentiation from other
parties is easier, or to establish them as more visibly different
alternative to other parties. 

PDE constrained optimal control in an infinite dimensional setting has
been dynamically developing as a research area since the late 1990s
and 2000s years (confer
\cite{hinze2008optimization,troltzsch2010optimal} for an overview and references), moving from prototype nonlinear PDE to
systems of nonlinear PDE, e.g.\ from fluid dynamics, and continues to
thrive. For the nonlinear, nonlocal partial
differential equations arising in opinion formation,
many works have favoured numerically less expensive approaches. One
reason is that the nonlocal terms in these equations make their
numerical solution and even more so their numerical optimisation very
challenging.
A popular class of algorithms is known as model predictive control,
which is a powerful, yet sub-optimal approach to attack the complex
control problem \cite{bib:AlbiPareschiZanella:MPC,wongkaew2015control} by solving iteratively small finite-horizon
optimal control problems. Similar many-particle models and associated
control approaches are present in flocking \cite{borzi2015modeling,bailo2018optimal}, crowd dynamics
\cite{albi2020mathematical,burger2020instantaneous,gong2023crowd},
epidemic dynamics \cite{albi2021control,zanella2023kinetic,bondesan2024kinetic}, opinion formation on graphs \cite{during2024breaking}, traffic flow \cite{herty2007instantaneous} and global optimisation \cite{carrillo2018analytical,pinnau2017consensus}.

In this paper, starting from controlled interactions in
Section~\ref{sec1}, we derive the Boltzmann-type and
Fokker-Planck-type systems of partial differential equations
associated with the controlled
leader-follower opinion formation model in Section~\ref{sec2:PDEs}.
Section~\ref{sec2:OCprobs} is devoted to PDE-constrained optimal control.
In an infinite-dimensional
function space setting we
derive first-order optimality conditions associated with the 
PDE-constrained optimal control problem with the Fokker-Planck-type
system as a state constraint, yielding an optimality system of coupled nonlinear partial differential equations.
In Section~\ref{sec3:Nummeth} we discuss the numerical algorithms. We employ a gradient-type sweeping algorithm
\cite{bib:Burgeretal:OCPMFG} to numerically attack the
optimality system obtained from the first-order optimality
conditions. We present the results from a finite elements based
simulation for different types of interactions and cost functionals in
Section~\ref{sec4:Numexp}.

\section{Multi-agent opinion formation model} \label{sec1}
\label{sec1:interactions}

We consider a large number of interacting individuals, with each
individual's opinion in an interval $\I := [-1, 1]$. The overall
population consists of
two species, an opinion leader species and a follower species.
Similar as in \cite{bib:During:strongleaders}, members of either
species can interact with each other through binary interactions, with
three possible cases: (a) the interactions where a member of the
leader species interacts with another member of the leader species (an
L and L interaction); (b) a member of the leader species interacts
with a member of the follower species (an L and F interaction); and
(c) a member of a follower species interacts with another member of a
follower species (an F and F interaction). We assume that the leader
species is formed by strong opinion leaders, as in \cite{bib:During:strongleaders}, that is
assertive individuals being able to influence followers but withstanding being
influenced themselves. To that end, we have that an individual of the
leader species will have their opinion unchanged by an interaction
with an individual from a follower species, but interact normally with
another leader. Unlike \cite{bib:During:strongleaders} we allow a
control function $u$ to modify the leader interactions. The
interactions are then as follows,
        \begin{itemize}
            \item L-L interactions: 
            \begin{equation} \label{int:LL}
                \begin{split}
                w^*_{Li} &= w_{Li} + \gamma_L P_L(w_{Li},w_{Lj}) (w_{Lj}-w_{Li}) + \eta_L D(w_{Li}) \\
                &\qquad + \frac{\gamma_L}{2} u(w_{Li}, w_{Lj},t) \\
                w_{Lj}^* &= w_{Lj} + \gamma_L P_L(w_{Lj},w_{Li}) (w_{Li}-w_{Lj}) + \tilde{\eta}_L D(w_{Lj})\\
                &\qquad + \frac{\gamma_L}{2} u(w_{Lj}, w_{Li},t)
                \end{split}
            \end{equation}
            \item L-F interactions:
            \begin{equation} \label{int:FL}
                \begin{split}
                    w_{Li}^* &= w_{Li} \\
                    w_{Fj}^* &= w_{Fj} + \gamma_F \Tilde{P}(w_{Fj},w_{Li}) (w_{Li}-w_{Fj}) + \tilde{\eta} D(w_{Fj})
                \end{split}
            \end{equation}
            \item F-F interactions:
            \begin{equation} \label{int:FF}
                \begin{split}
                    w_{Fi}^* &= w_{Fi} + \gamma_F P_F(w_{Fi},w_{Fj}) (w_{Fj}-w_{Fi}) + \eta_F D(w_{Fi}) \\
                    w_{Fj}^* &= w_{Fj} + \gamma_F P_F(w_{Fj},w_{Fi}) (w_{Fi}-w_{Fj}) + \tilde{\eta}_F D(w_{Fj})
                \end{split}
            \end{equation}
        \end{itemize}
        where $w_{Li}^*$, $w_{Fi}^*$ are the post-interaction opinions of the individuals.

    Note here that we add the control term only to the leader
    interaction \eqref{int:LL} which represents the effect of a
    particular strategy that involves changing the leader's apparent
    opinion in order to minimise a given cost functional $J$. This cost functional may be as simple as trying to maximise a follower base -- as is likely in the case of some form of marketing strategy -- or, in a paradigm where multiple leader species exist, to minimise the follower base of the other leader species. Since we are only controlling the leader interactions we expect the action of the control function $u$ on the follower species to be only through the interactions with leader species.
    
    In the terms with the form $\gamma_\nu P_\nu(w,v) (v-w)$ and
    $\gamma_\nu \Tilde{P}(w,v) (v-w)$, we are representing a
    compromise between the opinion of the first and second
    individuals. In the prevalent literature, the $P$ functions in
    these interactions has been taken as either  a bounded confidence
    model -- $P$ functions are taken as localisation functions such as
    a characteristic function or a hyperbolic tangent function
    \cite{bib:Toscani:kineticmodel, bib:During:inhomogeneous,
      bib:During:strongleaders} -- using the assumption that
    individuals will only interact with other individuals with similar
    opinions, or the Sznajd model \cite{sznajd2000opinion}-- $P$
    functions penalise extremal 
    opinions -- where it is assumed that more extreme opinions
    interact little with any other opinions. A more in-depth
    discussion of these choices for compromise functions is given below
    in Section~\ref{sec3:Nummeth}. More realistic choices for
    this compromise function while also taking into account the
    different ``demographics" of the population are proposed in \cite{bib:During:polling,during2024voter}.

    The constants $\gamma_L$ and $\gamma_F$ in these terms govern the
    rate of compromise over time. We note here that we choose $P$ such
    that the range of $P$ is in $[0,1]$ and we choose $\gamma_L$ and
    $\gamma_F$ to be in $[0, \tfrac{1}{2})$ to ensure post-interaction
    opinions remain in $\I$ (see \cite{bib:During:strongleaders}). 
    
    The terms of the form $\eta_\nu D(w)$, represent the influence of other sources, which in our case act randomly. This includes things like, media controlled by outside sources, the individual thinking about their opinion. Here we are making the assumption that these outside influences are unbiased. Here $\eta_L$ and $\eta_F$ represent the random elements. The function $D$ is chosen in such a way as to ensure that the diffusion term is sufficiently small near the boundaries -- and should not be confused with a differential operator. This is to ensure that the post-interaction opinions of the individuals $w^*$, $v^*$ remain within $\I$. A usual choice for this function in the literature \cite{bib:Toscani:kineticmodel, bib:During:inhomogeneous, bib:During:strongleaders} is,
    $$D(x) = (1 - x^2)^\alpha,$$
    for some $\alpha > 0$. We use this choice of $D$ with $\alpha = 2$
    throughout our numerical experiments.

    The random variables $\eta_L$ and $\eta_F$ are chosen such that their probability density function $\Theta$ from the set of probability measures,
    \begin{align*}
        M_{2+\delta} &= \bigg\{ \mathbb{P} : \mathbb{P} \text{ is a probability measure and, }\\
        &\qquad \mean{|\eta|^\alpha} = \int_I |\eta|^\alpha \ \od \mathbb{P}(\eta) < \infty, \forall \alpha \le 2 + \delta \bigg\},
    \end{align*}
    for some $\delta > 0$. We can construct this density a random variable $Y$ with the properties, $\mean{Y} = \int_I Y \ \od \Theta(Y) = 0$, $\mean{Y^2} = \int_I Y^2 \ \od \Theta(Y) = 1$, then we choose $\eta_\nu$ with the property that $\mean{|\eta_\nu|^p}= \mean{|Y\sigma_\nu|^p}$ for $\nu = L, F$, and $p \in [0, 2 + \delta]$, with $0 < \sigma_\nu < 1$. Here $\sigma_\nu^2$ is the variance of the random variables $\eta_\nu$.

\section{Partial differential equations} \label{sec2:PDEs}
In this section, we discuss the system of Boltzmann-type equations
that results from the interactions \eqref{int:LL}--\eqref{int:FF},
and then discuss the system of Fokker-Planck-type equations, which are
obtained from the quasi-invariant limit of the Boltzmann-type system.
Throughout the rest of this paper, we make the simplifying assumption $u(w, v, t) = u(w, t)$.

        \subsection{Boltzmann-type system}\label{sec2:sub1:BoltFokker}
       
Using standard methods of kinetic theory we can now derive the
Boltzmann-type system associated with the binary interactions, for
completeness this process is outlined in Appendix~\ref{secA1}. Let $\phi$ be test functions from $C_c^{2,\delta}(\I;\real)$, that are continuous and compactly supported functions on $\I$.

The Boltzmann-type equation for the leader species $f_L(w,t)$ in
weak form is given by
\begin{equation} \label{Boltz:weaklead2}
    \begin{split}
        \nderiv{t}\left(\int_{\I} \phi(w) f_L(w,t) \ \od w\right) =& \frac{1}{\tau_{LL}}\Big \langle \int_{\I^2}\big[ \phi(w^*) + \phi(v^*) - \phi(w) - \phi(v) \big] \\
        &\qquad \qquad \times  f_L(w,t) f_L(v,t) \ \od w \od v \Big \rangle.
    \end{split}
\end{equation}

For the follower species $f_F(w,t)$ we have the Boltzmann-type equation in weak form,
\begin{equation} \label{Boltz:weakfollow2}
    \begin{split}
        \nderiv{t}\left(\int_{\I} \phi(w) f_F(w,t) \ \od w\right) =& \frac{1}{\tau_{FF}}\Big \langle \int_{\I^2}\big[ \phi(w^*) + \phi(v^*) - \phi(w) - \phi(v) \big] \\
        & \qquad \qquad \times f_F(w,t) f_F(v,t) \ \od w \od v \Big \rangle \\
        &+ \frac{1}{\tau_{FL}} \Big \langle \int_{\I^2} \big[ \phi(w^*) - \phi(w)\big] \\
        & \qquad \qquad \times f_F(w,t) f_L(v,t) \ \od w \od v \Big \rangle.
    \end{split}
\end{equation}
  In the above we denote by $\tau_{LL}$, $\tau_{FL}$ and $\tau_{FF}$
  the relaxation times associated with the respective interactions.

\subsection{Fokker-Planck-type system}

In the so-called quasi-invariant limit a coupled system of non-linear,
non-local Fokker-Planck equations is obtained from
\eqref{Boltz:weaklead2}, \eqref{Boltz:weakfollow2} for this model.
 To this end time is rescaled setting $s=\gamma_L t$ and the transformed
 density functions $g_L(w,s) = f_L(w,t)$ are considered, taking the limit $\sigma,
 \gamma_L \rightarrow 0$ while keeping the value $\sigma^2/\gamma_L$ constant, see Appendix~\ref{secA2} for further
 details. We then obtain the Fokker-Planck equation for the leader species,
    \begin{equation} \label{Fokker:lead}
        \begin{split}
            \pderiv{g_L}{s}(w, s) =& \npderiv{w} \left( \left( \frac{1}{\tau_{LL}} \mc{K}[g_L](w,s) + \frac{1}{2\tau_{LL}} u(w,s)\right) g_L(w,s) \right)\\
            &+ \frac{\lambda_{L} }{2\tau_{LL}}\nsecpderiv{w} (D^2(w) g_L(w,s)),
        \end{split}
      \end{equation}
      where
    \begin{equation*}
        \mc{K}[g_L](w,s) := \int_\I P_L(w,v)(w-v)g_L(v,s) \ \od v,
      \end{equation*}
      subject to no-flow boundary conditions.
      
 Taking the same limit and setting $\alpha_{LF} =\gamma_L/ \gamma_F$ (see Appendix~\ref{secA2} for further
 details), the follower Fokker-Planck-type equation is given by,
    \begin{equation} \label{Fokker:follow}
        \begin{split}
            \pderiv{g_F}{s}(w,s) =& \alpha_{LF} \npderiv{w} \left(\left(\frac{1}{2\tau_{FL}} \mc{M}[g_L](w,s)+ \frac{1}{\tau_{FF}}\mc{N}[g_F](w,s) \right)g_F (w,s)\right) \\
            &+  \left(\frac{\lambda_{F}}{4\tau_{LF}} + \frac{\lambda_{F}}{2\tau_{FF}}\right) \nsecpderiv{w} (D^2(w) g_F(w,s)),    
        \end{split}
    \end{equation}
    where
    \begin{align*}
         \mc{M}[g_L](w,s) :=& \int_\I \Tilde{P}(w,v)(w-v)g_L(v,s) \ \od v,\\
         \mc{N}[g_F](w,s) :=& \int_\I P_F(w,v)(w-v)g_F(v,s) \ \od v,
    \end{align*}
    subject to no-flow boundary conditions.
    
    The system \eqref{Fokker:lead} and \eqref{Fokker:follow} can be expressed as a matrix-vector equation, namely,
    \begin{equation} \label{Fokker:matvec}
        \pderiv{\mb{g}}{t} = \npderiv{w} \bigg( (A[\mb{g}] + B[u]) \mb{g} + \npderiv{w} (C\mb{g})\bigg),
    \end{equation}
    with,
    \begin{align*}
        \mb{g} &:= (g_L, g_F)^\top \text{, the ``state"}, \\
        A[\mb{g}] &:= \left( \begin{array}{cc}
            \frac{1}{\tau_{LL}}\mc{K}[g_L] & 0 \\
            0 & \alpha_{LF}\left(\frac{1}{2\tau_{FL}}\mc{M}[g_L] + \frac{1} {\tau_{FF}}\mc{N}_F[g_F] \right)
        \end{array} \right), \\
        B[u] &:= \left( \begin{array}{cc}
            \frac{u}{2\tau_{LL}} & 0 \\
            0 & 0 
        \end{array} \right), \\
    C &:= \left( \begin{array}{cc}
        \frac{\lambda_{L}}{2\tau_{LL}}D^2 & 0 \\
        0 & \left(\frac{\lambda_{F}}{4\tau_{FL}} + \frac{\lambda_{F}}{2\tau_{FF}}\right)D^2
    \end{array} \right), \\
    \mb{g}(0,w) &= \mb{g}^0(w).
    \end{align*}

    \section{PDE-constrained optimal control} \label{sec2:OCprobs}

    We introduce the optimal control problem with the system of
    Fokker-Planck-type equations \eqref{Fokker:matvec} as a PDE constraint. We derive the
    first-order optimality conditions using the formal Lagrangian Method. 
    
\subsection{Optimal control problem}
    
We now define the control problem for this model. The optimal control
of the model can come in three shapes: 1. microscopic -- which
optimises a cost functional with the microscopic interactions as the
state; 2. mesoscopic -- where the cost functional is minimised subject
to the system of Boltzmann-type equations \eqref{Boltz:weaklead2},
\eqref{Boltz:weakfollow2}; 3. macroscopic -- which minimises the cost
functional subject to the Fokker-Planck-type system. The hierarchy of
these optimal control problems for a single species are discussed in
\cite{bib:Albi:meancontrol}. Here, we are considering the optimal
control of the system of Fokker-Planck-type equations
arising for the two species leader-follower model. In the following,
we set out the macroscopic control problem (without pointwise control constraints).
    
\begin{Prob} \label{Fokker:Prob}
    Let $\mb{g}$ be the state functions and let $u$ be the associated control function, we wish to solve the following optimisation problem:
    \begin{equation}
        \begin{split}
            \min_{u\in {U}} J(\mb{g}, u),
        \end{split}
    \end{equation}
        subject to
    \begin{equation*}
        \pderiv{\mb{g}}{t} = \npderiv{w} \bigg( (A[\mb{g}] + B[u]) \mb{g} + \npderiv{w} (C\mb{g})\bigg),
    \end{equation*}
    with the initial conditions,
    \begin{equation*}
        \mb{g}(0,w) = \mb{g}^0(w),
      \end{equation*}
      and subject to no-flow boundary conditions.
    \end{Prob}
A typical form for the cost functional $J$ is a quadratic cost,
\begin{equation} \label{cost}
     J(\mb{g}, u) =     \frac{1}{2} \int_0^T \int_\I  \left(|w-w_{dF}|^2 + \beta |u|^2\right) g_F  \ \od w \od t,
   \end{equation}
where $U$ is the set of admissible controls and $\beta >0$. Minimisation of this cost
functional aims to concentrate opinions around a
given desired state $w_{dF}$ while also minimising the energy used by the control variable to
influence the state variable. Other choices of the cost functional are
investigated in Section~\ref{sec41:costfunctional}. In the following
we consider the set of admissible controls
$$
U=\big \{ ||u||_{L^2(0,T;L^\infty(\I))} \le M :u\in L^2(0,T;L^\infty(\I)) \big \},
$$
for some $M>0$. 

We can adapt results from \cite{bib:Albi:meancontrol} to establish the existence and
uniqueness of solutions to \eqref{Fokker:matvec} as well as the
existence of optimal controls for Problem~\ref{Fokker:Prob}. We denote
the dual space of $H^1(\I)$ by $H^{-1}(\I)$ and the duality pairing by
$\langle \cdot, \cdot \rangle_{{H^{-1},H^1}}: H^{-1}(\I)\times
H^{1}(\I)\to \mathbb{R},$ $\langle g, \varphi \rangle_{{H^{-1},H^1}}:=g(\varphi)$.

\begin{definition}\label{def:weaksol}
  For given $T>0$, functions $g_L, g_F:\I\times (0,T) \to [0,\infty)$
  are weak solutions of \eqref{Fokker:matvec} if and only if
  \begin{enumerate}
    \item $g_i \in L^2(0,T;H^1(\I))$, $\tfrac{\partial g_i}{\partial t} \in L^2(0,T;H^{-1}(\I))$, \quad
      $i=L,F,$
    \item For any $\varphi \in L^2(0,T;H^1(\I))$,
      \begin{align*}
             \int_0^T \Big\langle\pderiv{g_L}{t},\varphi
        \Big\rangle_{H^{-1},H^1}dt -& \int_0^T\int_{\I} \left( \left(
                                      \frac{1}{\tau_{LL}} \mc{K}[g_L]
                                      +\frac{1}{2\tau_{LL}} u\right)
                                      g_L \right) \frac{\partial
        \varphi}{\partial w}\\
            &+ \frac{\lambda_{L} }{2\tau_{LL}}\npderiv{w} (D^2(w) g_L) \frac{\partial
        \varphi}{\partial w} dw\,dt=0,\\
    \int_0^T \Big\langle\pderiv{g_F}{t} ,\varphi \Big\rangle_{H^{-1},H^1}dt -&\int_0^T\int_{\I} \alpha_{LF}\left(\left(\frac{1}{2\tau_{FL}} \mc{M}[g_L]+ \frac{1}{\tau_{FF}}\mc{N}[g_F](w,s) \right)g_F \right) \frac{\partial
        \varphi}{\partial w}\\
            &+  \left(\frac{\lambda_{F}}{4\tau_{LF}} +
              \frac{\lambda_{F}}{2\tau_{FF}}\right) \npderiv{w}
              (D^2(w) g_F(w,s)) \frac{\partial
        \varphi}{\partial w} dw\,dt=0.
      \end{align*}
    \end{enumerate}
  \end{definition}
\begin{theorem}\label{thm:existence}
  For given $T,M>0$, let $g_L^{0},g_F^{0}\in L^2(\I)$ and $u\in
  U$. Then there exist unique weak solutions $g_L,g_F$ to
  \eqref{Fokker:matvec} in the sense of Definition~\ref{def:weaksol}.
\end{theorem}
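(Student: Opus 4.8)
The plan is to exploit the triangular structure of system~\eqref{Fokker:matvec}: the leader equation~\eqref{Fokker:lead} involves only $g_L$ and the given control $u$, while the follower equation~\eqref{Fokker:follow} involves $g_F$ and, through $\mc{M}[g_L]$, the already-constructed $g_L$. So I would first build $g_L$ and then $g_F$, in each case by a Banach fixed-point argument with the nonlocal drift frozen. For the leader equation: given $\bar g\in C([0,T];L^2(\I))$, set $\bar a(w,t):=\tfrac{1}{\tau_{LL}}\mc{K}[\bar g](w,t)+\tfrac{1}{2\tau_{LL}}u(w,t)$ and solve the \emph{linear} degenerate-parabolic problem $\pderiv{g_L}{t}=\npderiv{w}\bigl(\bar a\,g_L+\tfrac{\lambda_L}{2\tau_{LL}}\npderiv{w}(D^2 g_L)\bigr)$ with datum $g_L^0$ and no-flux boundary conditions. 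Existence and uniqueness of a weak solution with $g_L\in L^2(0,T;H^1(\I))$, $\partial_t g_L\in L^2(0,T;H^{-1}(\I))$ I would obtain from the Galerkin method together with Lions' theorem, and then show that the solution operator $\Phi\colon\bar g\mapsto g_L$ is a contraction; its fixed point is the sought weak solution.

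The estimates I would need are mild. Since $P_L,\tilde P,P_F$ take values in $[0,1]$ and $|w-v|\le 2$ on $\I$, one has $\norm{\mc{K}[h](\cdot,t)}_{L^\infty(\I)}\le 2\norm{h(\cdot,t)}_{L^1(\I)}\le C\norm{h(\cdot,t)}_{L^2(\I)}$, with the analogous bounds for $\mc{M}$, $\mc{N}$, as well as the Lipschitz estimates $\norm{\mc{K}[h_1]-\mc{K}[h_2]}_{L^\infty(\I)}\le C\norm{h_1-h_2}_{L^2(\I)}$, and similarly for $\mc{M},\mc{N}$. Hence $\bar a\in L^2(0,T;L^\infty(\I))$ whenever $u\in U$. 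Testing the linear equation with $g_L$, writing $\npderiv{w}(D^2 g_L)=D^2\npderiv{w}g_L+(D^2)'g_L$ and integrating by parts (boundary terms vanishing by the no-flux condition and $D^2(\pm 1)=0$), Young's inequality and Grönwall's lemma would give —- modulo the boundary degeneracy discussed below -— an a priori bound on $\norm{g_L}_{C([0,T];L^2(\I))}$ and on $\int_0^T\!\int_\I D^2|\partial_w g_L|^2$ in terms of $\norm{g_L^0}_{L^2(\I)}$, $M$ and $T$; estimating $\partial_t g_L$ in $L^2(0,T;H^{-1}(\I))$ from the equation and applying the Aubin--Lions lemma then supplies the compactness to pass to the limit in the Galerkin scheme. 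For the fixed point I would estimate the difference of two solutions by the same energy argument and the Lipschitz bounds above, obtaining a contraction on a short interval $[0,T_0]$; then, testing with the negative part $g_L^-$ shows $g_L\ge 0$, and the no-flux condition gives $\nderiv{t}\int_\I g_L=0$, so the $L^1$-norm -— hence the constant multiplying the quadratic term in the $L^2$ estimate -— stays bounded and $T_0$ does not shrink, letting me iterate up to $T$; uniqueness on $[0,T]$ follows by patching local uniqueness. I would then repeat the whole argument for the follower equation, now with $\mc{M}[g_L]$ a fixed coefficient in $L^2(0,T;L^\infty(\I))$ and $\mc{N}[g_F]$ the term carried through the fixed-point iteration.

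The hard part will be the degeneracy of the diffusion: the matrix $C$ contains $D^2(w)=(1-w^2)^{2\alpha}$, which vanishes at $w=\pm 1$, so the energy identity only yields coercivity in the \emph{weighted} norm with weight $D^2$, not in $H^1(\I)$, while $u$ is merely $L^\infty$ (not $W^{1,\infty}$) in $w$, so the control contribution $\int_\I u\,g_L\,\partial_w g_L$ cannot be removed by integration by parts. Reconciling this with the $L^2(0,T;H^1(\I))$ regularity demanded in Definition~\ref{def:weaksol} is the delicate point; I expect to handle it by exploiting the no-flux boundary conditions together with the fact that $D^2$ is bounded below on compact subsets of the interior of $\I$ -— establishing well-posedness first in the weighted space and then upgrading interior regularity -— in the spirit of the analysis of Fokker--Planck opinion models in \cite{bib:Toscani:kineticmodel,bib:During:strongleaders} and of the controlled continuity equation in \cite{bib:Albi:meancontrol}.
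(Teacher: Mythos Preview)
Your proposal is correct and follows the same overall strategy as the paper: exploit the triangular structure to first solve the leader equation and then, with $g_L$ in hand, the follower equation, in each case appealing to the well-posedness theory for the single-species controlled Fokker--Planck equation. The paper simply invokes Theorem~2.4 of \cite{bib:Albi:meancontrol} for the leader and remarks that the follower equation is handled by the same argument with the extra $\mc{M}[g_L]$ term estimated analogously; your sketch (fixed point with frozen nonlocal drift, Galerkin, energy estimates, and the boundary-degeneracy caveat) is essentially what lies behind that cited result.
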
%
\begin{proof}
The existence and uniqueness for the leader equation \eqref{Fokker:lead} follows from Theorem
2.4 in \cite{bib:Albi:meancontrol}. With $g_L$ given, we can follow along the lines of the
same proof for the follower equation \eqref{Fokker:follow}, the additional term in \eqref{Fokker:follow} arising from the
leader-follower interaction can be estimated similarly as the
other interaction term, noting $g_L\in L^2(0,T;H^1(\I)).$
\end{proof}

\begin{theorem}
   For given $T,M>0$, let $\mb{g}^0 =(g_L^{0},g_F^{0})\in L^2(\I)$. Then there
   exist an optimal control $\bar{u}\in U$ with associated optimal state $\mb{\bar{g}}=(\bar{g}_L, \bar{g}_F)$.
\end{theorem}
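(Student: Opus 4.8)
The plan is to use the direct method in the calculus of variations. First I would take a minimising sequence $(u_n)_{n\in\nat}\subset U$ for $J$, i.e.\ with $J(\mb{g}_n,u_n)\to \inf_{u\in U}J(\mb{g},u)$, where for each $n$ the state $\mb{g}_n=(g_{L,n},g_{F,n})$ is the unique weak solution associated to $u_n$ guaranteed by Theorem~\ref{thm:existence}. Since $U$ is a bounded subset of the reflexive Hilbert space $L^2(0,T;L^\infty(\I))$ and is convex and (norm-)closed, it is weakly sequentially closed; hence, after passing to a subsequence (not relabelled), $u_n \rightharpoonup \bar{u}$ weakly in $L^2(0,T;L^\infty(\I))$ with $\bar{u}\in U$.

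Next I would establish uniform a priori bounds on the states, independent of $n$. Tracking the energy estimates that underlie Theorem~2.4 of \cite{bib:Albi:meancontrol} and its adaptation in Theorem~\ref{thm:existence}, and using the uniform bound $\|u_n\|_{L^2(0,T;L^\infty(\I))}\le M$ together with $D\in L^\infty(\I)$, one obtains that $(g_{L,n})$ and $(g_{F,n})$ are bounded in $L^2(0,T;H^1(\I))$ and their time derivatives are bounded in $L^2(0,T;H^{-1}(\I))$. By the Aubin--Lions--Simon compactness lemma, $(g_{i,n})$ is then relatively compact in $L^2(0,T;L^2(\I))$, so along a further subsequence $g_{i,n}\to \bar{g}_i$ strongly in $L^2(0,T;L^2(\I))$, weakly in $L^2(0,T;H^1(\I))$, and $\partial_t g_{i,n}\rightharpoonup \partial_t \bar{g}_i$ weakly in $L^2(0,T;H^{-1}(\I))$, for $i=L,F$. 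Nonnegativity of $\bar{g}_L,\bar{g}_F$ is preserved under $L^2$-convergence.

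I would then pass to the limit in the weak formulation of Definition~\ref{def:weaksol} to show that $\bar{\mb{g}}$ is the weak solution associated to $\bar{u}$. The linear terms (time-derivative pairing and the diffusion terms $\partial_w(D^2 g_{i,n})$) pass to the limit by weak convergence in the appropriate spaces. The nonlocal interaction terms $\mc{K}[g_{L,n}]g_{L,n}$, $\mc{M}[g_{L,n}]g_{F,n}$, $\mc{N}[g_{F,n}]g_{F,n}$ are quadratic, so here I would combine strong $L^2$-convergence of one factor with weak $L^2$-convergence of the other, using that the kernels $P_L,\tilde P,P_F$ are bounded so that $\mc{K},\mc{M},\mc{N}$ are bounded linear (hence weak-to-weak and, being integral operators with bounded kernel, also strong-to-strong) operators on $L^2(\I)$. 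The genuinely new term is the control term $\tfrac{1}{2\tau_{LL}}u_n g_{L,n}$: this is the product of a weakly convergent factor $u_n$ and a strongly convergent factor $g_{L,n}$, so the product converges weakly to $\tfrac{1}{2\tau_{LL}}\bar u\,\bar g_L$ when tested against $\partial_w\varphi\in L^2(0,T;L^2(\I))$ — one must check the pairing makes sense, which it does since $g_{L,n}\to\bar g_L$ in $L^2(0,T;L^2(\I))$ and $u_n$ is bounded in $L^2(0,T;L^\infty(\I))$, giving $u_n g_{L,n}$ bounded in $L^2(0,T;L^2(\I))$ and convergent in, say, $L^1$ on which it can then be identified. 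By uniqueness in Theorem~\ref{thm:existence}, $\bar{\mb{g}}$ is \emph{the} state corresponding to $\bar u$. Finally, since $g_{F,n}\to\bar g_F$ strongly in $L^2(0,T;L^2(\I))$ and $u\mapsto \int_0^T\!\!\int_\I |u|^2 g_F$ is handled by weak lower semicontinuity of the $L^2$-norm together with the strong convergence of the weight $g_{F,n}$ (nonnegative), while the tracking term $\int_0^T\!\!\int_\I |w-w_{dF}|^2 g_{F,n}$ converges by strong $L^2$-convergence, we get $J(\bar{\mb{g}},\bar u)\le \liminf_n J(\mb{g}_n,u_n)=\inf_{u\in U}J$, so $(\bar u,\bar{\mb{g}})$ is optimal.

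The main obstacle is the lower semicontinuity and limit-passage for the control-dependent terms, because $u_n$ converges only weakly: one must carefully exploit the \emph{strong} $L^2$-compactness of the states (via Aubin--Lions) to handle the products $u_n g_{L,n}$ in the constraint and $|u_n|^2 g_{F,n}$ in the cost, and one should verify that the weight $g_{F,n}$ in the cost functional does not destroy weak lower semicontinuity — this is fine because $g_{F,n}\ge0$ converges strongly, so $\int |u_n|^2 g_{F,n}\ge \int |u_n|^2 \bar g_F + o(1)$ and then weak lower semicontinuity of $u\mapsto\int|u|^2\bar g_F\,dw\,dt$ applies. A secondary point requiring a line of justification is that the a priori state bounds are genuinely uniform in $n$, which follows by inspecting the Grönwall argument in \cite{bib:Albi:meancontrol} and noting all constants depend on $u_n$ only through $M$.
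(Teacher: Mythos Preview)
Your overall strategy---direct method with a minimising sequence, uniform state bounds, Aubin--Lions compactness for the states, passage to the limit in the weak formulation, and lower semicontinuity of $J$---is exactly the approach taken in the paper, and your treatment of the nonlinear and control terms is in fact more detailed than what the paper spells out.

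There is, however, one genuine error in your extraction of a limit control. You assert that $L^2(0,T;L^\infty(\I))$ is a ``reflexive Hilbert space'' and then use weak sequential compactness of bounded sets. This space is neither a Hilbert space nor reflexive, since $L^\infty(\I)$ fails both properties; so weak compactness in the sense you invoke is not available. The paper handles this correctly by working with weak-$\ast$ convergence: one identifies $L^2(0,T;L^\infty(\I))$ with the dual of the separable space $L^2(0,T;L^1(\I))$ and applies the Banach--Alaoglu theorem to extract a subsequence with $u^{j_k}\overset{\ast}{\rightharpoonup}\bar u$. The closed ball $U$ is weak-$\ast$ closed, so $\bar u\in U$. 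This weak-$\ast$ convergence is all you need downstream: since $\I$ is bounded, $L^2(0,T;L^2(\I))\hookrightarrow L^2(0,T;L^1(\I))$, so testing $u_n$ against $g_{L,n}\partial_w\varphi$ or against $u_n\bar g_F$ still makes sense, and your product and lower-semicontinuity arguments go through unchanged once you replace ``weak'' by ``weak-$\ast$'' for the control.
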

\begin{proof}
  The proof uses standard arguments, following similar steps as in Theorem
2.5 in \cite{bib:Albi:meancontrol}, presented here in concise form. Thanks to
Theorem~\ref{thm:existence}, for each $u\in U$ there exist unique
solutions $g_L,g_F$ to \eqref{Fokker:matvec}. Noting that the cost
functional is bounded from below by zero, we can consider 
minimising sequences $\mb{g}^{j} =(g_L^{j}, g_F^{j})$ and $u^{j}$ with
$$
\lim_{j\to \infty}   J(\mb{g}^{j}, u^{j})=\inf_{u\in U} J(\mb{g},u).
$$
We can employ Banach-Alaoglu's theorem to extract converging subsequences,
$$
g_L^{j_k},g_F^{j_k}\to
\bar{g}_L, \bar{g}_F\;\text{in}\;L^2(\I\times (0,T))\quad \text{and}\quad u^{j_k}\overset{\ast}{\rightharpoonup}  \bar{u}\;\text{in}\;L^2(0,T;L^\infty(\I))\quad\text{as}\;k\to\infty.
$$
Noting that $U$ is weak-$\ast$ closed, we have $\bar{u}\in U.$
The above regularity is sufficient to pass to the limit in the weak
formulation of \eqref{Fokker:matvec} and to conclude that
$\mb{\bar{ g}}=( \bar{g}_L, \bar{g}_F)$ solves \eqref{Fokker:matvec}. Semicontinuity of
the cost functional allows to conclude
$$
\inf_{u\in U} J(\mb{g},u)=\lim_{j\to \infty}   J(\mb{g}^{j},
u^{j})=\liminf_{k\to\infty}  J(\mb{g}^{j_k}, u^{j_k})\ge  J(\mb{\bar{ g}}, \bar{ u}),
$$
therefore $\bar{u}\in U$ is the optimal control with associated optimal state $\mb{\bar{g}}$.
\end{proof}

\subsection{First-order optimality conditions}\label{sec2:sub2:optcond}
    
Using the formal Lagrangian method for Problem~\ref{Fokker:Prob}, by introducing the adjoint variable $\mb{p} = (p_L, p_F)^\top$ as our Lagrange multipliers we construct the formal Lagrangian,
\begin{equation} \label{Fokker:Lagrange}
            \mc{L}(\mb{g},u,\mb{p}) = J(\mb{g},u) - \inner{\pderiv{\mb{g}}{s}}{\mb{p}} + \inner{\npderiv{w} ((A[\mb{g}]+B[u])\mb{g})}{\mb{p}} - \inner{\npderiv{w}(C\mb{g})}{\pderiv{\mb{p}}{w}},
\end{equation}
for which we must find the stationary point, $(\bar{\mb{g}}, \bar{u}, \bar{\mb{p}})$. To this end we compute the Fr\'echet derivatives of $\mc{L}$ with respect to $\mb{p}$, $\mb{g}$ and $u$, with the directions $\phi$, $\psi$ and $v$ respectively. That is we look for those functions $(\bar{\mb{g}}, \bar{u}, \bar{\mb{p}})$ such that,
\begin{align*}
    \nabla_\mb{p} \mc{L}(\bar{\mb{g}}, \bar{u}, \bar{\mb{p}})\phi &= 0, \\
    \nabla_\mb{g} \mc{L}(\bar{\mb{g}}, \bar{u}, \bar{\mb{p}}) \psi &= 0, \\
    \nabla_u\mc{L}(\bar{\mb{g}}, \bar{u}, \bar{\mb{p}})v &= 0.
\end{align*}
This leads to the weak form of the state system, $\nabla_\mb{p} \mc{L}\phi = 0$,
\begin{equation} \label{OCP:weakstate}
    \begin{split}
            \nabla_\mb{p}\mc{L}\phi = \inner{\pderiv{\mb{g}}{s}}{\phi} - \inner{\npderiv{w}((A[\mb{g}]+B[u])\mb{g})}{\phi} + \inner{\npderiv{w}(C\mb{g})}{\pderiv{\phi}{w}} =0,
    \end{split}
\end{equation}
the weak form of the adjoint system, $\nabla_\mb{g} \mc{L} \psi = 0$,
\begin{equation}\label{OCP:weakadjoint}
    \begin{split}
            \nabla_\mb{g}\mc{L}\psi =& \nabla_\mb{g} J \psi + \inner{\pderiv{\mb{p}}{s}}{\psi} + \inner{ \pderiv{\mb{p}}{w} }{ (A[\mb{g}] + B[u])\psi} + \inner{ \pderiv{\mb{p}}{w}}{A[\psi]\mb{g}} \\
    &- \inner{\pderiv{\mb{p}}{w}}{\npderiv{w} (C \psi)} - \int_{\I} \mb{p}(T) \cdot \psi(T) \ \od w =0,
    \end{split}
\end{equation}
and finally the optimality condition, $\nabla_u\mc{L}v = 0$,
\begin{equation}\label{OCP:weakopt}
    \begin{split}
            \nabla_u\mc{L}v =& \nabla_u J v + \inner{\npderiv{w}(B[v]\mb{g})}{\mb{p}} = 0.
    \end{split}
\end{equation}
where we omit the bars for clarity.
We can now write \eqref{OCP:weakstate}--\eqref{OCP:weakopt} in their corresponding strong forms. These strong forms are somewhat unwieldy and therefore, we make a small assumption on the cost-functional $J$, to present the above system in more readable way -- that is we assume that the cost functional will have the general form $J(\bg, \bu) = J_T[\bg] + J_s[\bg, \bu]$, that is, $J$ is separated into a time dependent part $J_s$ and a final time part $J_T$. For example the typical choice for $J$ stated in Section~\ref{sec2:sub1:BoltFokker}, the decomposition is $J_T[\bg] = 1/2 \norm{\bg(T) - \bg_\I}_{L^2(\I;\real^2)}^2$, $J_s[\bg, \bu] = 1/2 \norm{\bu}_U^2$. We then write these equations in the strong form beginning with the state system,
\begin{equation} \label{OCP:strongstate}
    \begin{split}
        e(\mb{g}, u) := \pderiv{\mb{g}}{s} = \npderiv{w}((A[\mb{g}]+B[u])\mb{g}) + \nsecpderiv{w}(C\mb{g}),
    \end{split}
\end{equation}
with initial condition $\bg(0, w) = \mathbf{h}_0(w)$. We then write the strong form of the adjoint system,
\begin{equation} \label{OCP:strongadjoint}
 e^*(\mb{g}, u, \mb{p}) := \nabla_\mb{g} J_s + \pderiv{\mb{p}}{s} - (A[\mb{g}] + B[u])\pderiv{\mb{p}}{w} + A^*\left[\mb{g},\pderiv{\mb{p}}{w}\right] + C\secpderiv{\mb{p}}{w} =0,
\end{equation}
with the final time condition $\bp(T, w) = \del_{\bg} J_T[\bg]$ and finally the strong form of the optimality condition,
\begin{equation} \label{OCP:strongoptcond}
    \varepsilon(\mb{g}, u, \mb{p}) = \nabla_u J_s + B^*[\bg]\pderiv{\mb{p}}{w} = 0.
\end{equation}
with the associated boundary conditions. In these equations, we required writing some extra terms, specifically,
\begin{align*}
    A^*\left[\bg, \pderiv{\bp}{w}\right] &:= \left( \begin{array}{cc}
            \frac{1}{\tau_{LL}}    \kappa[g_L, \pderiv{p_L}{w}] + \frac{\alpha_{LF}}{2 \tau_{FL}} \mu[g_F, \pderiv{p_F}{w}] \\
            \frac{\alpha_{LF}}{\tau_{FF}} \nu[g_F, \pderiv{p_F}{w}]
        \end{array} \right), \\
    B^*[\bg] &:= \left( \begin{array}{cc}
            \frac{1}{2\tau_{LL}}g_L & 0 \\
            0 & 0 
        \end{array} \right), \\
    \kappa\left[g_L, \pderiv{p_L}{w}\right] & := \int_{\I} P_L(w,v)(v-w) g_L  \pderiv{p_L}{w} \ \od w ,\\
    \mu\left[g_F, \pderiv{p_F}{w}\right] &:= \int_{\I} \Tilde{P}(w,v)(v-w) g_F  \pderiv{p_F}{w} \ \od w ,\\
        \nu\left[g_F, \pderiv{p_F}{w}\right] & := \int_{\I} P_F(w,v)(v-w) g_F  \pderiv{p_F}{w} \ \od w.
\end{align*}
These terms were computed using a similar method to that in \cite{bib:Albi:meancontrol}. 

 \section{Numerical methods} \label{sec3:Nummeth}
      In order to solve the optimality system from Section~\ref{sec2:OCprobs} numerically, we use a gradient-type sweeping algorithm \cite{bib:Burgeretal:OCPMFG}. This involves making an initial guess for the control function, $u_{-1}$, and then creating a sequence of functions $(\mb{g}_i, u_i, \mb{p}_i)$ using the following algorithm:
\begin{Alg}\label{alg1}
With the initial guess $u_{-1}$ for the control function, set $n = 0$,
    \begin{enumerate}[(1)]
        \item Solve the state system $e(\mb{g_n}, u_{n-1}) = 0$ forwards in time. \label{number1}
        \item With the known $\mb{g}_n$, solve the adjoint system $e^*(\mb{g}_n, u_{n-1}, \mb{p}_n)$ backwards in time.
        \item Update the control function using $u_n = u_{n-1} - \nu \varepsilon(\mb{g}_n, u_{n-1}, \mb{p}_n)$, for some chosen step size $\nu >0$.
        \item  Set $n = n+1$, return to step \ref{number1}. until convergence of the cost functional $J$.
    \end{enumerate}
\end{Alg}
When solving the state system numerically some additional challenges present themselves. Since the solution of the state equation is a probability density function we require that $\bg_n$ is unconditionally positive and the mass is conserved -- $\int_\I g_{L, n}(t, w) \ \od w = \int_\I g_{F, n}(t, w) \ \od w = 1$ -- for all $s$. Additionally since the compromise term $A[\mb{g}_n] \bg_n$ is non-linear and non-local special care must be taken when choosing a numerical method.

To combat these challenges, we use a Chang-Cooper method to discretise
in space -- which is second-order consistent in space and preserves
quasi-steady states -- and the modified Patankar-Runge-Kutta (MPRK)
scheme to discretise in time -- which is unconditionally positive \cite{bib:Bartel}. One
main advantage of this method over an explicit Euler method is the lack of constraints on the time step for convergence.

We partition our space axis with $L$ equidistant nodal points over $\I$, that is $w_i = -1 + i \Delta w$ with $\Delta w = 2/L$, and we partition our time axis with an equidistant mesh over $[0,T)$ with some $\Delta s$ chosen and $Q = \lfloor T/\Delta s \rfloor$ nodal points. We write a lower index to denote the spatial node value and an upper index to denote the temporal node value, that is $\bg_n(w_i,s^t) = \bg_{n, i}^t$. At each iteration in Algorithm 1 the values of $n$ are fixed, we omit these for ease of reading.

We now discuss the specifics of the discretisation in more detail. Since we are using a Chang-Cooper scheme for discretisation in space, we consider the semi-discretisation for \eqref{OCP:strongstate},
\begin{equation} \label{changcoop}
  \deriv{\mb{g}_i}{t} = \frac{\F_{\iplus}-\F_{\iminus}}{\Delta w},
\end{equation}
for the numerical fluxes,
\begin{equation*}
    \F_{i+\half} = \alpha_{\iplus} \left( \left(1-\delta_{\iplus} \right) \bg_{i+1} + \delta_{\iplus} \bg_i \right) + C_{\iplus}\frac{\bg_{i+1}-\bg_i}{\Delta w},
\end{equation*}
with
\begin{align*}
  \alpha_{\iplus} & = A[\bg] \left( w_{\iplus} \right) + B[u] \left( w_{\iplus} \right) + C^\prime_{\iplus}, \\
  \delta_{\iplus} & = \frac{1}{1-\exp \left( \lambda_{\iplus} \right) } + \frac{1}{\lambda_{\iplus}}, \\
  \lambda_{\iplus} & = \frac{\alpha_{\iplus} \Delta w }{C_{\iplus}}, \\
  C_{\iplus} & = C\left(w_{\iplus}\right).
\end{align*}
With the semi-discretisation in space, we can write this equation in a production-destruction formulation, that is,
\begin{equation}
    \deriv{\mb{g}_i}{t} = \sum_{j= 0} ^ L p_{i, j}(\bg) - d_{i, j}(\bg), \qquad \forall i = 0, 1,..., L
\end{equation}
with the corresponding initial condition and $\bg$ representing the vector of time dependent functions and $p_{i,j}$ and $d_{i,j}$ representing the contribution to nodal point $i$ from nodal point $j$ -- the production -- and the loss from nodal point $j$ to nodal point $i$ -- the destruction. Note that $p_{i,j} = d_{j,i}$. Since the solution of our state system are probability density functions we require that our scheme be conservative which allows us to narrow down how we can choose the functional $p$ and $d$, namely we wish for $p_{i,i} = d_{i,i} = 0$ since this leads to
\begin{equation*}
    \sum_{i=0}^L \deriv{\mb{g}_i}{t} = \sum_{i=0}^L\sum_{j= 0} ^ L p_{i, j}(\bg) - d_{i, j}(\bg) = \sum_{i=0}^L p_{i,i} - d_{i,i} = 0,
\end{equation*}
which is equivalent to the conservation of mass for the semi-discretised functions $\bg_i$.

We use the proposed values for $p$ and $d$ presented in \cite{bib:Bartel} -- where a more in depth description of this method can be found -- namely,
\begin{align*}
  p_{i,i+1}[\bg] & = d_{i+1,i} := \frac{\max \{0,\alpha_{\iplus} \} \left( \left( 1 - \delta_{\iplus} \right) \bg_{i+1} + \delta_{\iplus} \bg_i \right) + \frac{C_{\iplus} \bg_{i+1}}{\Delta w} }{\Delta w}, \\
  p_{i,i-1} [\bg] & = d_{i-1,i} := \frac{-\min \{ 0,\alpha_{\iminus} \} \left( \left(1-\delta_{\iminus} \right) \bg_i + \delta_{\iminus} \bg_{i-1} \right) + \frac{C_{\iminus} \bg_{i-1}}{\Delta w} }{\Delta w}, \\
  p_{i,j} & = d_{i,j \phantom{-1}} := 0 \qquad \forall \ j \notin \{ i+1,i-1 \}.
\end{align*}
Finally, we can use the Patankar trick along with a second-order Runge-Kutta scheme for the time integration of \eqref{changcoop},
\begin{align*}
  \overline{\bg}_i & = \bg^t_i + \Delta s \left( \sum_{j =0}^L p_{i,j} [\bg^t] \frac{\overline{\bg}_j}{\bg^t_j} - \sum_{j =0}^L d_{i,j} [\bg^t] \frac{\overline{\bg}_i}{\bg^t_i} \right), \\
  \bg^{t+1}_i & = \bg^t_i + \frac{\Delta s}{2} \left( \sum_{j =0}^L \left( p_{i,j} [\bg^t] + p_{i,j} [\overline{\bg} ] \right) \frac{\bg^{t+1}_j}{\overline{\bg}_j} - \sum_{j =0}^L \left( d_{i,j} [\bg^t ] + d_{i,j} [\overline{\bg}] \right) \frac{\bg^{t+1}_i}{\overline{\bg}_i} \right).
\end{align*}

For the adjoint problem \eqref{OCP:strongadjoint} the terms which
require additional care are the non-local terms, $A[\bg] \od\bp/\od w$
and $A^*[\bg, \od\bp/\od w]$. We use an upwind scheme to discretise
the adjoint problem in space and discretise in time using a modified
forward Euler scheme. The slight modification to the forward Euler
scheme comes from treating the $A^*$ integral term in \eqref{OCP:strongadjoint} explicitly with respect to $\bp$. That is,
\begin{equation*}
  - \left( \bp^t - \bp^{t+1} \right) - \Delta t \left( A [\bg^t] + B [u^t] \right) \pderiv{\bp^t}{w} + \Delta t C_i \secpderiv{\bp^t}{w} + A^* \left[ \bg^t, \pderiv{\bp^{t+1}}{w} \right] + \nabla_{\bg} J_s \left( \bg^t,u^t \right) = 0 ,
\end{equation*}
which, after isolating the $\bp^t$ terms on the left hand side and the other terms on the right hand side, yields,
\begin{align*}
  \left( \frac{\Delta s}{\Delta w} \kappa_{-}  - \frac{\Delta s}{\Delta w^2} C_i \right) & \bp^t_{i+1} + \left( 1 + \frac{2\Delta s}{\Delta w^2} C_i + \frac{\Delta s}{\Delta w}\left( \kappa_{+} - \kappa_{-} \right) \right) \bp^t_i + \left( - \frac{\Delta s}{\Delta w}\kappa_{+} - \frac{\Delta s}{\Delta w^2} C_i \right) \bp^t_{i-1} \\
  & = \bp^{t+1}_i + \Delta s \nabla_{\bg}J_s \left( \bg^t_i,u^t_i \right) - A^* \left[ \bg^t, \pderiv{\bp^{t+1}}{w} \right] ,
\end{align*}
with
\begin{align*}
  \kappa_{+} & = \max \{ 0, A[\bg^t](w_i) + B [u^t](w_i) \}, \\ 
  \kappa_{-} & = \min \{ 0, A[\bg^t](w_i) + B [u^t](w_i) \}. 
\end{align*}
Finally, the update of the control function is as described in Algorithm 1, evaluated at all space and time grid points.

 \section{Numerical experiments} \label{sec4:Numexp}

In this section, we show the results of some numerical experiments, exploring the behaviour of these models using different compromise functions $P$ and cost functionals $J$. We take the initial guess of the control function in all experiments as $u \equiv 0$ for all $(w, t) \in \I \times [0, T]$, and define the state at time $t=0$ as localisation functions described in terms of its leader and follower components as,
\begin{align}
    h_{L,0}(w) & = \Lambda_L (\tanh(k(R_L - |w - c_L|)) + 1), \\
    h_{F,0}(w) & = \Lambda_F (\tanh(k(R_F - |w - c_F|)) + 1),
\end{align}
where $R_L, R_F$ are the radii of the graph and $c_L, c_F$ are the centres. In this equation, $\Lambda_L$ and $\Lambda_F$ are the constants which ensure, $$\int_\I h_{L,0} (w) \ \od w = \int_\I h_{F,0} (w) \ \od w = 1,$$ and $k$ is the sharpness factor of the hyperbolic tangent function. In particular we have chosen $R_L = R_F = 0.85$, $c_L = c_F = 0$, $k = 10$.

    \begin{figure}[h]
	    \begin{subfigure}[b]{0.45\linewidth} 
	        \centering
		    \includegraphics[scale=0.42]{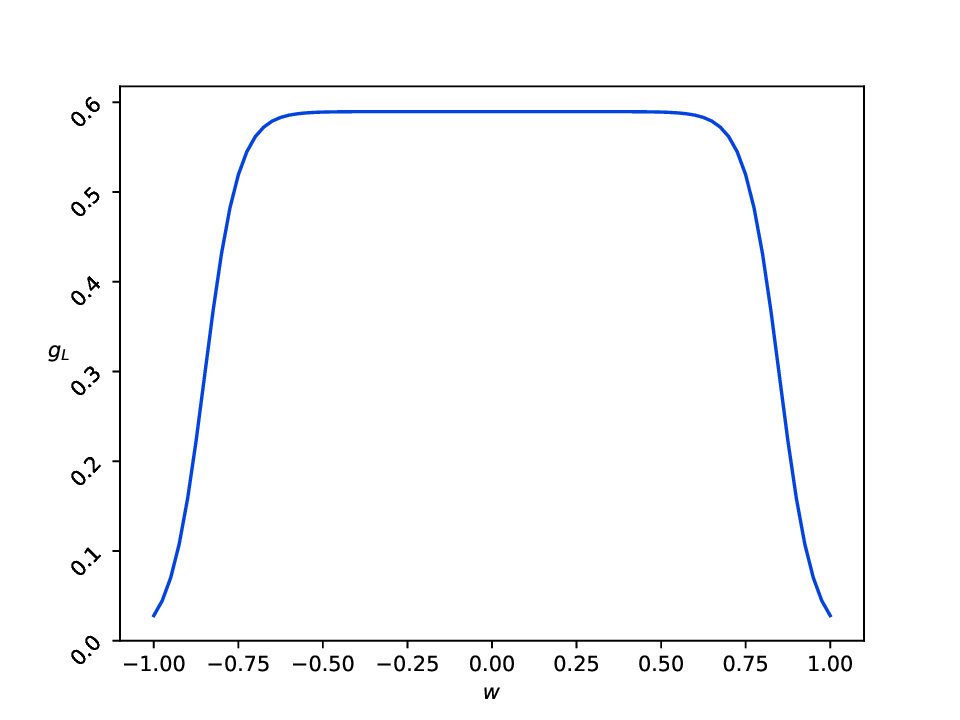}
    		\caption{Initial time leader state $g_L^0$}
	    \end{subfigure}
    \hspace{0.04cm}
    	\begin{subfigure}[b]{0.45\linewidth} 
    	\centering
	    	\includegraphics[scale=0.42]{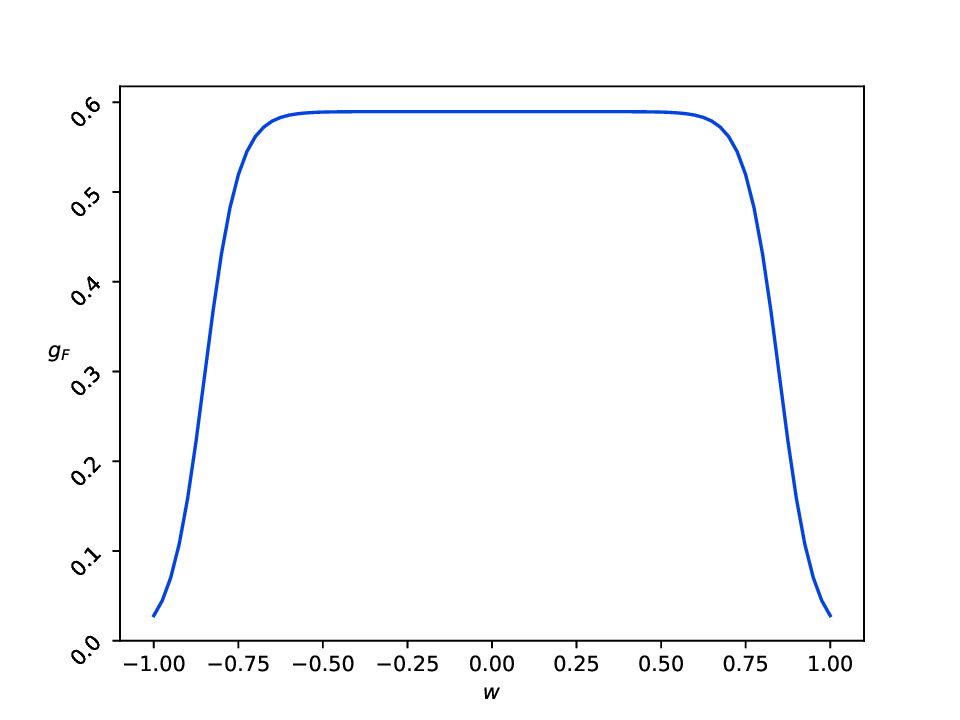}
		    \caption{Initial time follower state $g_F^0$}
	    \end{subfigure}
    	\caption{Densities for the leader and follower species at time $t=0$.}
    \end{figure}

For all the following experiments, we partition our opinion axis with
$L=80$ equidistant nodal points in $\I$, that is $\Delta w = 2/80 =
0.025$. Due to the absence of any time-stepping restriction in the
MPRK method, we are free to choose a relatively large $\Delta s$ and we make the choice of $\Delta s = 5 \Delta w$ to reduce the time to run the simulations.

In Table~\ref{parameters}, we report the parameters used in the numerical experiments for the Fokker-Planck modelling.

\begin{table}[h]
    \caption{Default parameters used for the Fokker-Planck simulations.}
        \label{parameters}
    \centering
    \begin{tabular}{c|cccccccc}
         Parameter & $\tau_{LL}$ & $\tau_{FL}$ & $\tau_{FF}$ &
                                                               $\lambda_L$ & $\lambda_F$ & $r$ & $\beta$\\
         \hline
          Value & 0.2 & 2 & 0.2 & 0.05 & 0.05 & 0.5& 0.05
    \end{tabular}
\end{table}

\subsection{Investigating the choice of cost functional $J$}
\label{sec41:costfunctional}
     
In this numerical experiment, we explore the behaviour of our model using some example cost functionals $J$ and compare these to the uncontrolled dynamics -- a top view of the time evolution of which are presented in Figure~\ref{heatmapuncont}. For this we use a bounded confidence model for our compromise functions, $P_L = P_F = \Tilde{P} = P$ as follows,
$$P(x, y):= \begin{dcases}
    1 & \text{for } |x-y| < r, \\
    0 & \text{otherwise},
\end{dcases}$$ for $r = 0.5$.

    \begin{figure}[htbp]
	    \begin{subfigure}[b]{0.45\linewidth} 
	        \centering
		    \includegraphics[scale=0.44]{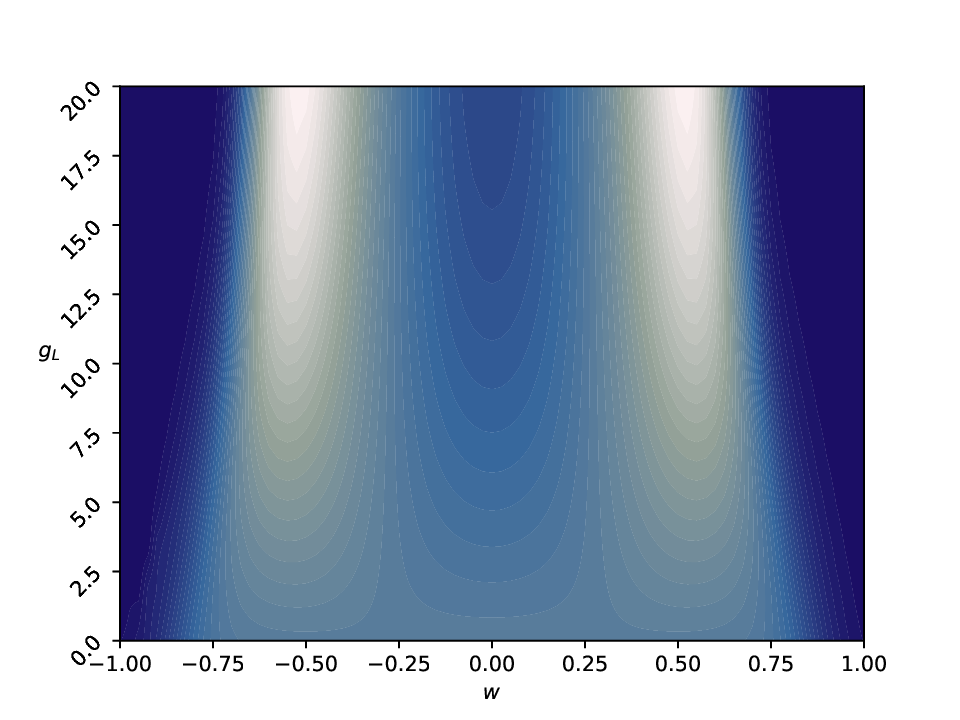}
    		\caption{Leader time evolution $g_L$}
    		\label{fig:uncontleader}
	    \end{subfigure}
    \hspace{0.04cm}
    	\begin{subfigure}[b]{0.45\linewidth} 
    	\centering
	    	\includegraphics[scale=0.44]{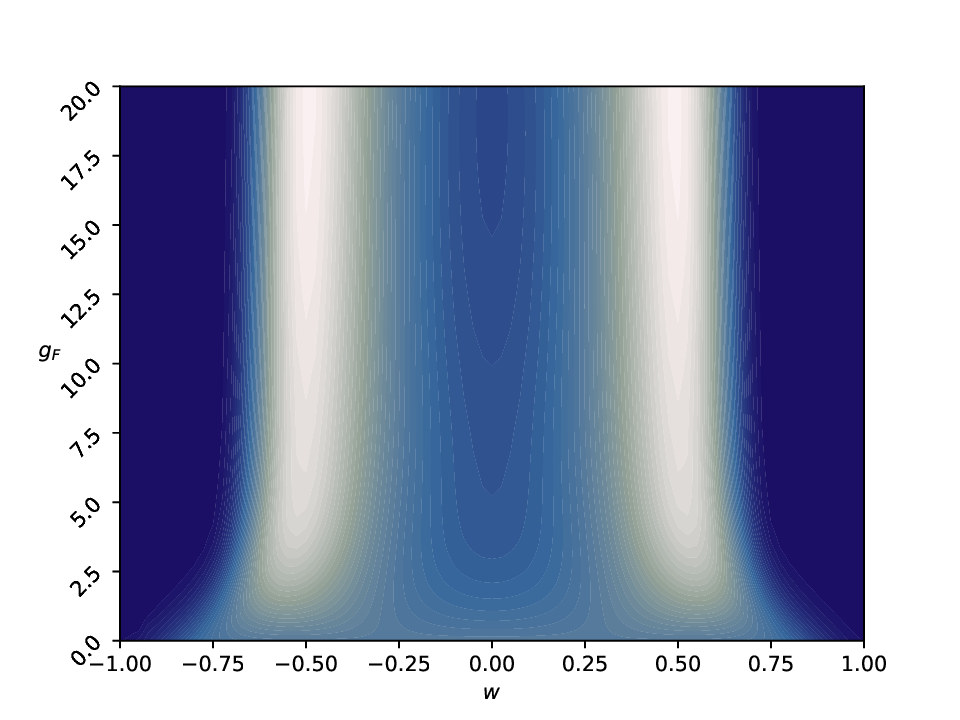}
		    \caption{Follower time evolution $g_F$.}
		    \label{fig:uncontfollower}
	    \end{subfigure}
    	\caption{Top-down view of the time evolution of the
          uncontrolled densities $g_L$, $g_F$.}
     \label{heatmapuncont}
    \end{figure}

The first example cost functional is similar to that from \cite{bib:Albi:meancontrol, bib:AlbiPareschiZanella:MPC}, which mirrors a common type of cost functional for the binary controlled dynamics \eqref{int:LL}--\eqref{int:FF},
\begin{equation} \label{centring}
    J(\bg, u) = \int_0^T \int_\I \left( \frac{1}{2} \left(   |w-w_{dL}|^2 g_L + \beta |u|^2 g_L \right) + \frac{1}{2} \left(   |w-w_{dF}|^2 g_F + \beta |u|^2 g_F \right) \right) \ \od w \od t,
\end{equation}
where $w_{dL}, w_{dF}$ are some desired opinions that we wish the densities $g_L$ and $g_F$ to be centred around respectively.

    \begin{figure}[htbp]
	    \begin{subfigure}[b]{0.5\linewidth}
		\centering
		    \includegraphics[scale=0.4]{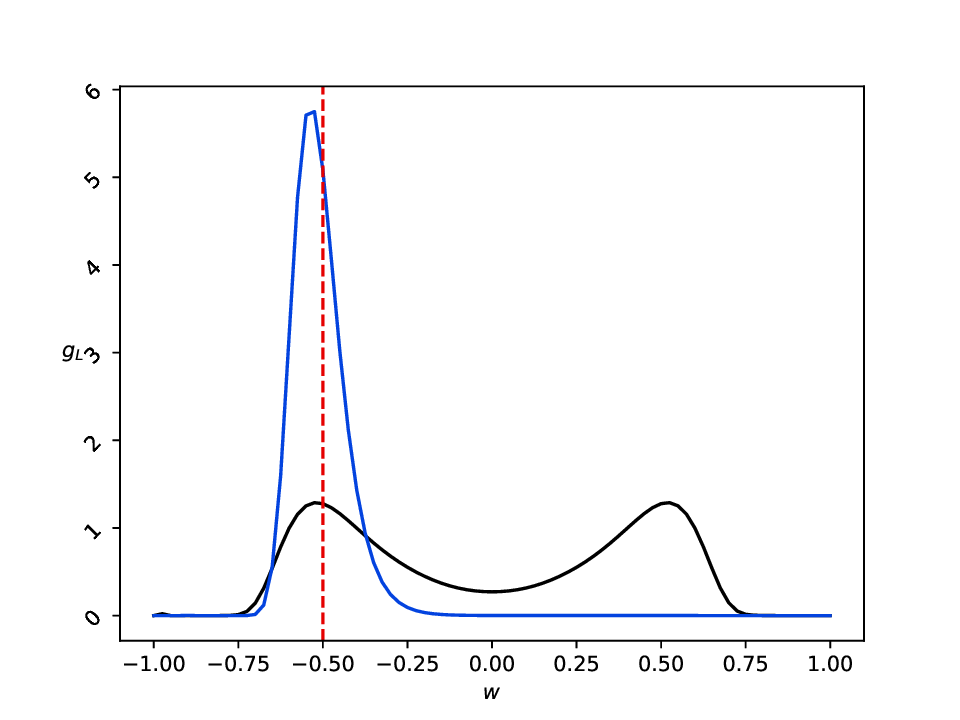}
		    \caption{Final time leader density.}
	    \end{subfigure}
		\hspace{0.04cm}
	    \begin{subfigure}[b]{0.5\linewidth}
		    \centering
		    \includegraphics[scale=0.4]{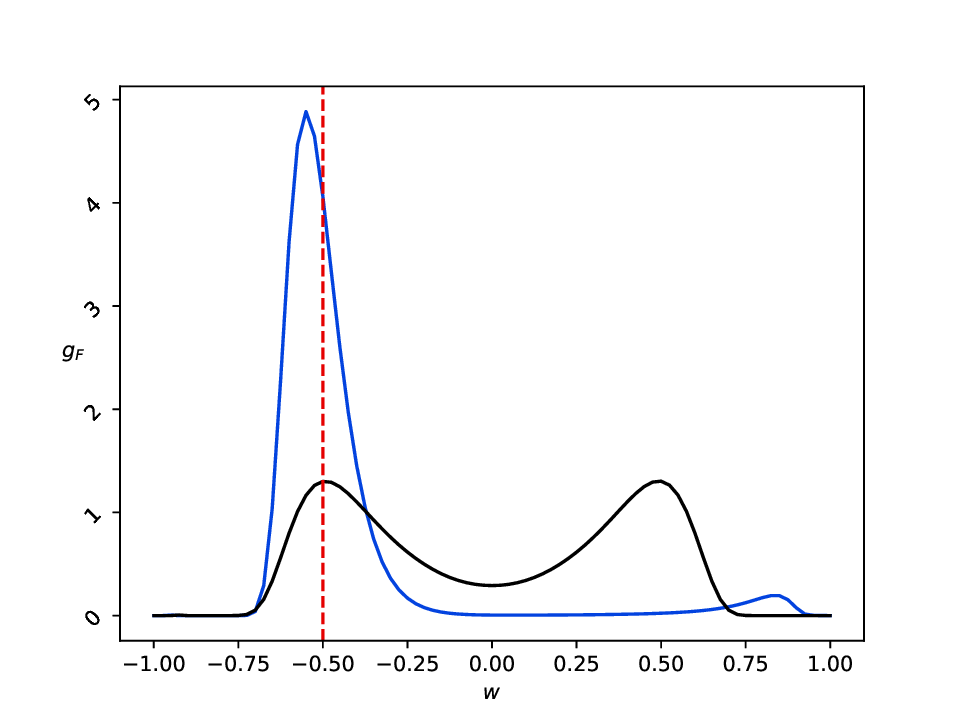}
		    \caption{Final time follower density.}
	    \end{subfigure} 
    
    	\begin{subfigure}[b]{0.5\linewidth}
    	\centering
	    	\includegraphics[scale=0.4]{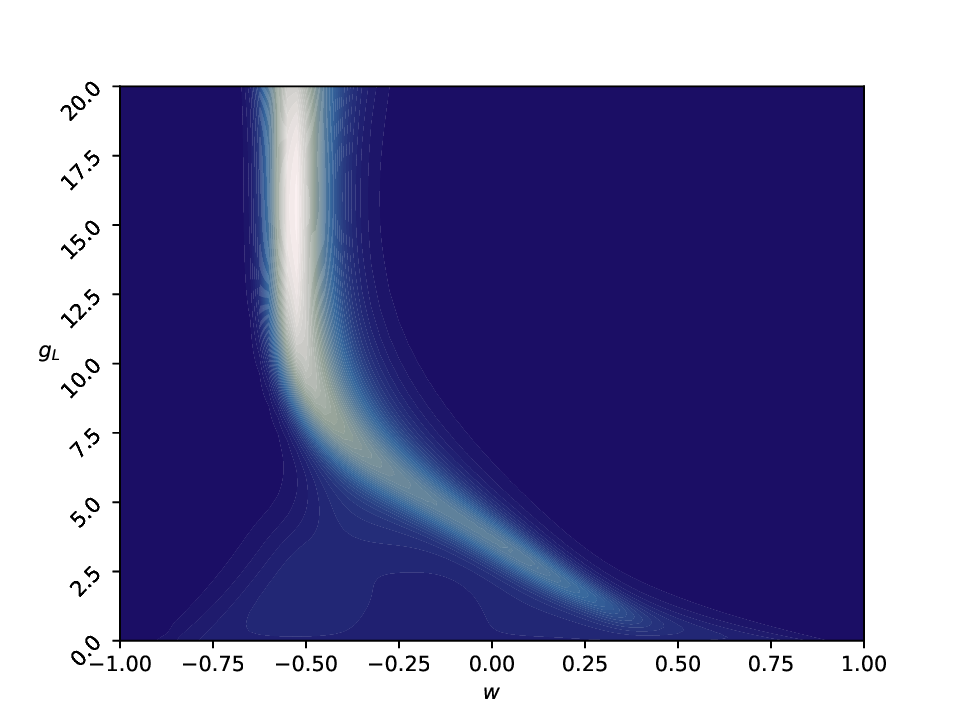}
		    \caption{Leader time evolution $g_L$.}
	    \end{subfigure} 
	    \hspace{0.04cm}
	    \begin{subfigure}[b]{0.5\linewidth}
		    \centering
		    \includegraphics[scale=0.4]{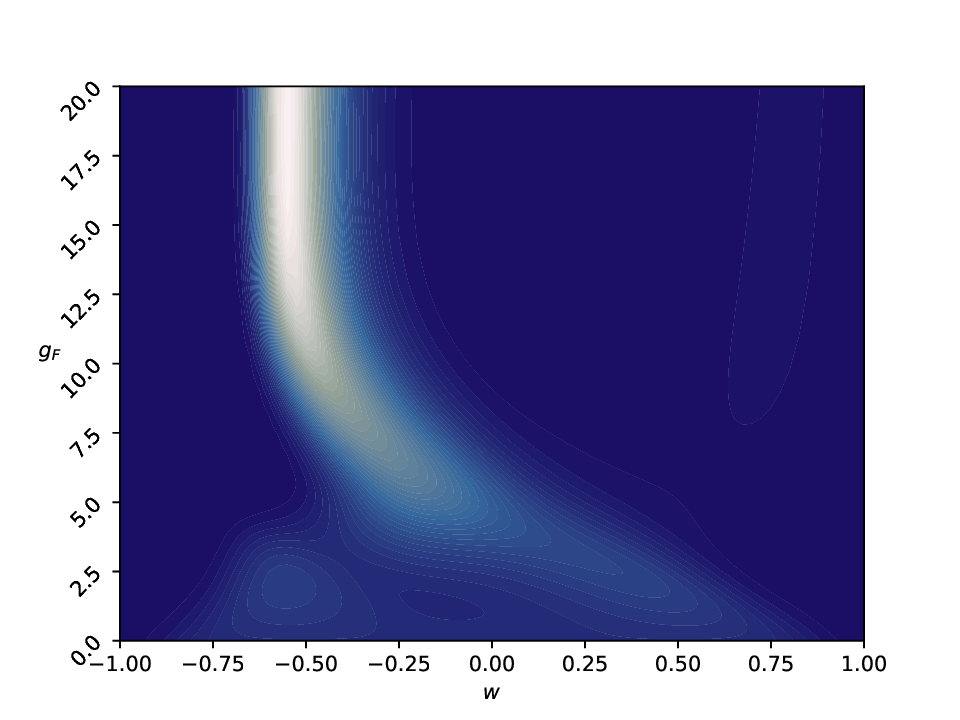}
            \caption{Follower time evolution $g_F$.}
	    \end{subfigure}
        
	    \hspace{0.25\linewidth}
        	    \begin{subfigure}[b]{0.5\linewidth}
		\centering
		    \includegraphics[scale=0.4]{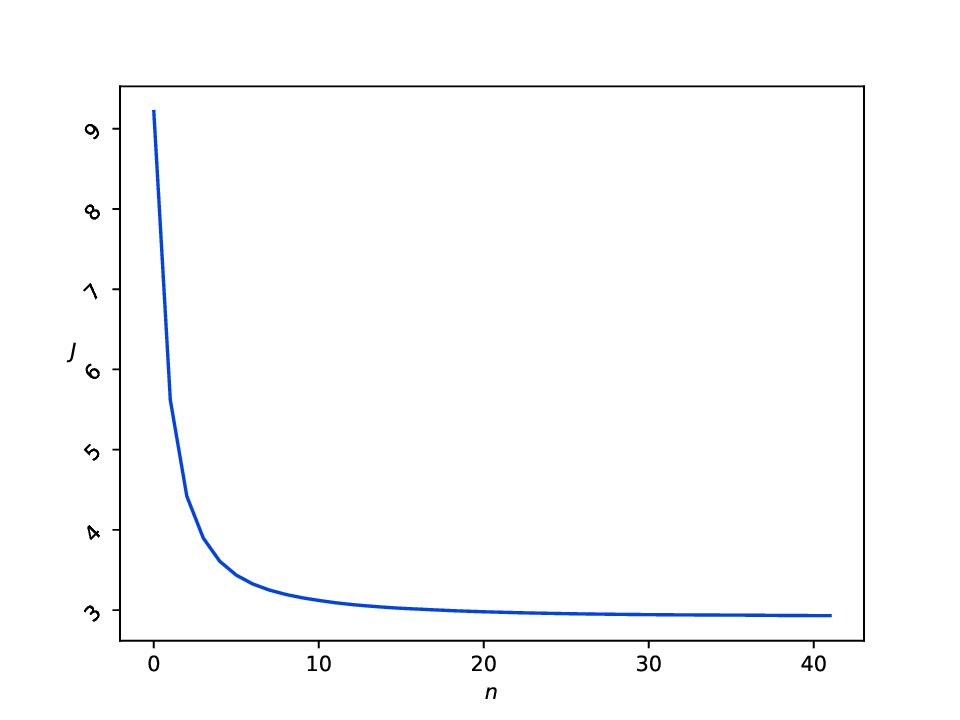}
		    \caption{Cost functional.}
	    \end{subfigure}
	    \caption{The top set of graphs represent the final time states for the leader and follower species (solid blue), the uncontrolled dynamics (solid black) and the desired opinion $w_d$ according to the cost functional \eqref{centring} (dashed red). The middle set of graphs represent the top-down view of the time evolution of the leader and follower species. The bottom graph shows the values of the cost functional \eqref{centring} evaluated at each step of Algorithm~\ref{alg1}.}
	    \label{centringgraphs}
    \end{figure}

We present the resulting states for the cost functional \eqref{centring} in Figure~\ref{centringgraphs} using $w_{dL}= w_{dF} = w_d = -0.5$. As we can see from these figures, the controlled states, mainly cluster around the desired position $w_d$ in this case, whereas the uncontrolled states form two distinct and equal clusters. In the follower dynamics we see a peak forming, centred at $w=0.8$, which -- when looking at the time evolutions of the state -- appears to be a concentration of followers that are outside the confidence radius $r=0.5$ of the leader species in the early time steps. This might be due to the fast convergence of the leaders to the desired state $w_d$, thus ``leaving'' some followers behind.

To explore this slightly more, we examine a variation of cost functional \eqref{centring}, where the cost functional depends only on the follower density rather than depending on both the leader and follower densities,

\begin{equation}\label{Fcentring}
    J(\bg, u) =  \frac{1}{2} \int_0^T \int_\I  \left(|w-w_{dF}|^2 + \beta |u|^2\right) g_F  \ \od w \od t.
\end{equation}

    \begin{figure}
	    \begin{subfigure}[b]{0.5\linewidth}
		\centering
		    \includegraphics[scale=0.4]{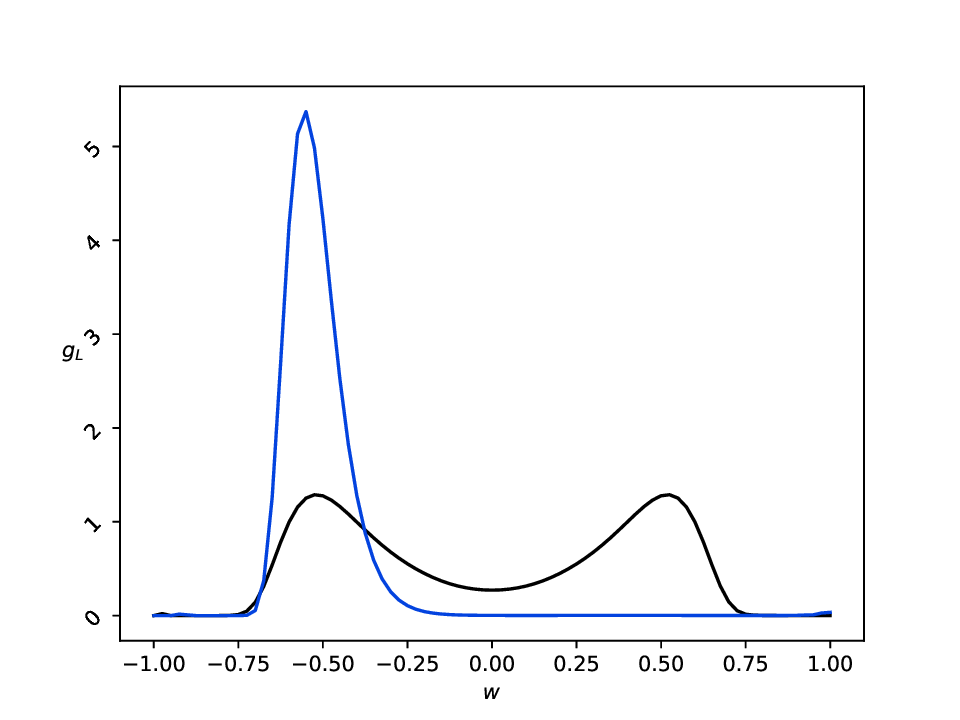}
		    \caption{Final time leader density.}
	    \end{subfigure}
		\hspace{0.04cm}
	    \begin{subfigure}[b]{0.5\linewidth}
		    \centering
		    \includegraphics[scale=0.4]{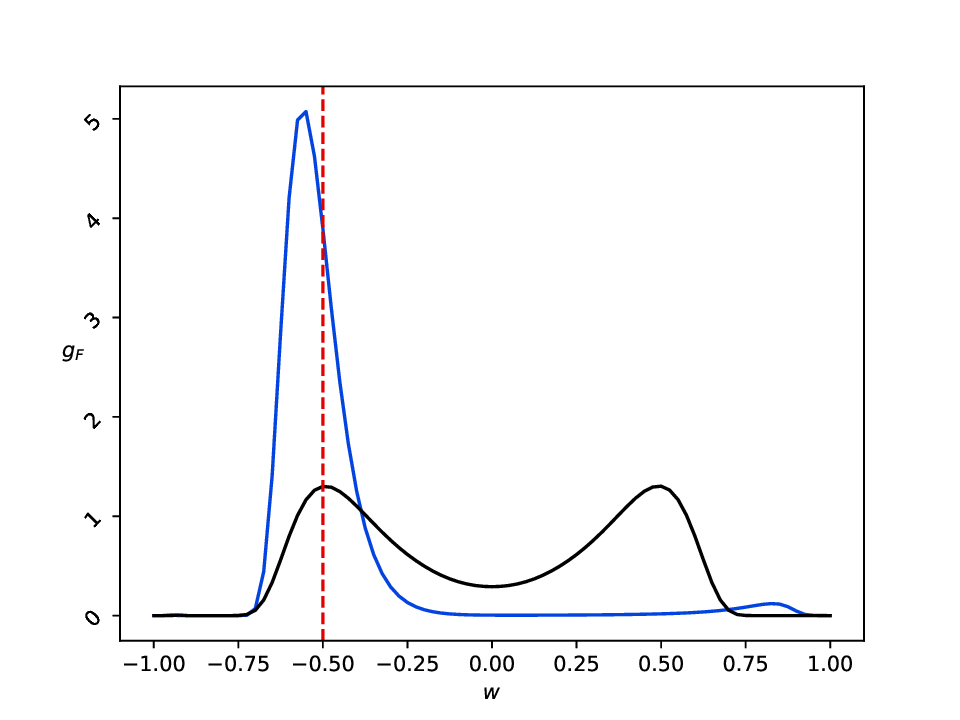}
		    \caption{Final time follower density.}
	    \end{subfigure} 
    
    	\begin{subfigure}[b]{0.5\linewidth}
    	\centering
	    	\includegraphics[scale=0.4]{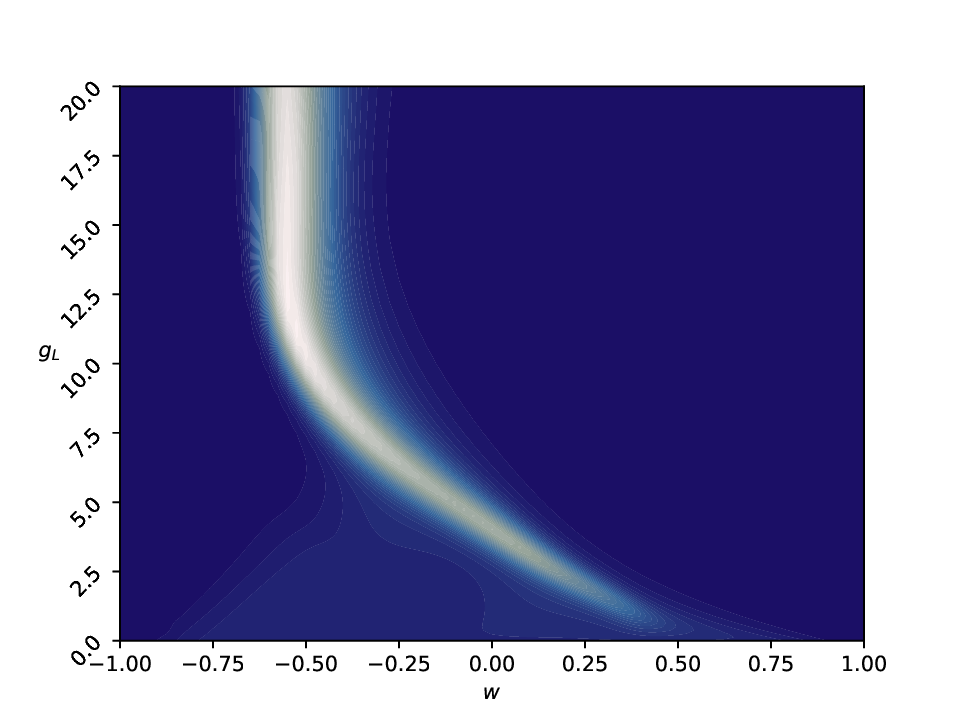}
		    \caption{Leader time evolution $g_L$.}
	    \end{subfigure} 
	    \hspace{0.04cm}
	    \begin{subfigure}[b]{0.5\linewidth}
		    \centering
		    \includegraphics[scale=0.4]{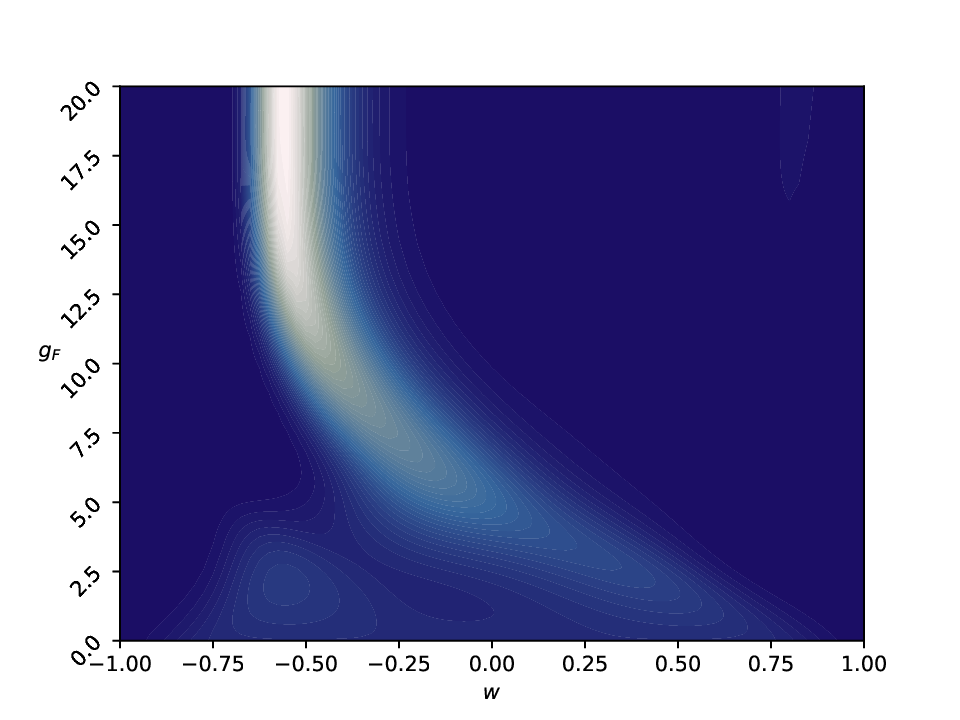}
            \caption{Follower time evolution $g_F$.}
	    \end{subfigure}

        	    \hspace{0.25\linewidth}
        	    \begin{subfigure}[b]{0.5\linewidth}
		\centering
		    \includegraphics[scale=0.4]{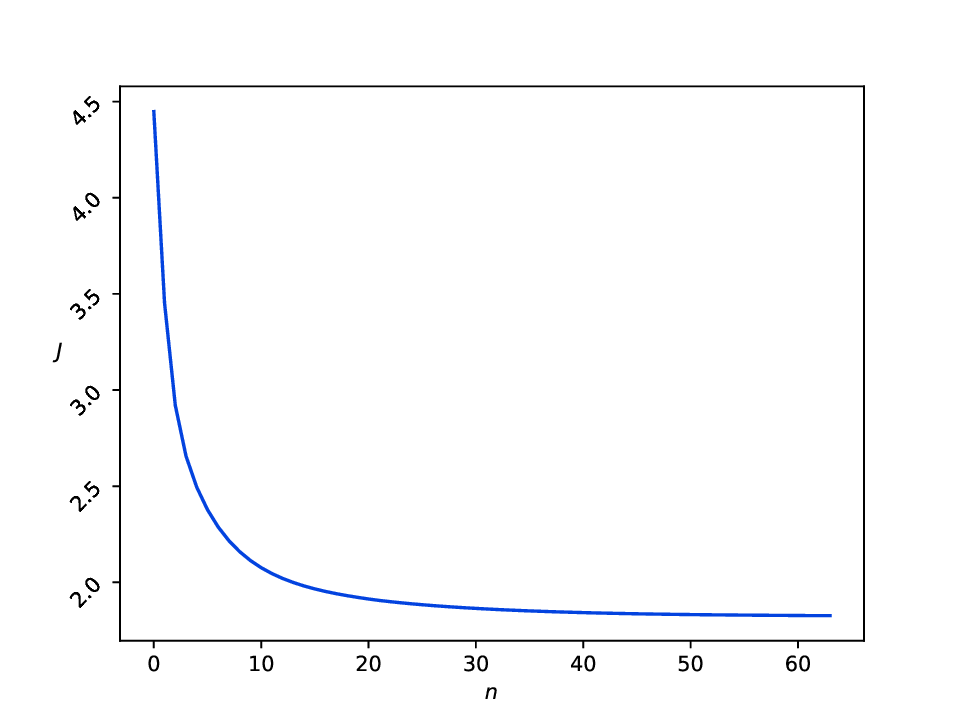}
		    \caption{Cost functional.}
	    \end{subfigure}
	    \caption{The top set of graphs represent the final time states for the leader and follower species (solid blue), the uncontrolled dynamics (solid black) and the desired opinion $w_d$ according to the cost functional \eqref{Fcentring} (dashed red). The middle set of graphs represent the top-down view of the time evolution of the leader and follower species. The bottom graph is the values of the cost functional \eqref{Fcentring} evaluated at each step of Algorithm~\ref{alg1}.}
	    \label{Fcentringgraphs}
    \end{figure}

From the results presented in Figure~\ref{Fcentringgraphs}, we see the leader species follows a very similar strategy as in the previous experiment, however the follower species still forms a peak at around $w=0.8$, although this peak has less mass. A more illustrative example of this change in strategy for the leader species is where we modify the nature of the interactions between the leaders and followers. In this example we take $$\Tilde{P}(x, y) = b(1-x^2),$$ known as a Sznajd-type interaction \cite{sznajd2000opinion} for some constant $b$ and where $\Tilde{P}$ is the compromise function that appears in the follower side of the leader-follower interaction \eqref{int:FL}.

Since this compromise function only depends on the interacting individuals' opinions, this type of interaction can be seen as the propensity for individuals to change their opinion, where individuals with extreme opinions -- close to $-1$ and $1$ -- are influenced little by interactions and individuals with central opinions -- close to $0$ -- are influenced greatly by the interactions. The constant $b$ is used to describe the weight of the interaction and in general we allow $b \in [-1,1]$ to ensure that $|\Tilde{P}|<1$, with $b >0$ representing a concentration of the follower density around the leader density and $b<0$ representing the separation of the followers from the leaders.

In this numerical experiment we use $b=-1$, to ensure that either the leader species or the follower species can converge on the desired opinion but not both. Recall that the interactions within the leader and follower species are still using the bounded confidence-type interactions -- interactions only occur when the opinions of individuals are close together. This leads to radically different strategies for the leader species depending on the choice of cost functional.

    \begin{figure}
	    \begin{subfigure}[b]{0.5\linewidth}
		\centering
		    \includegraphics[scale=0.4]{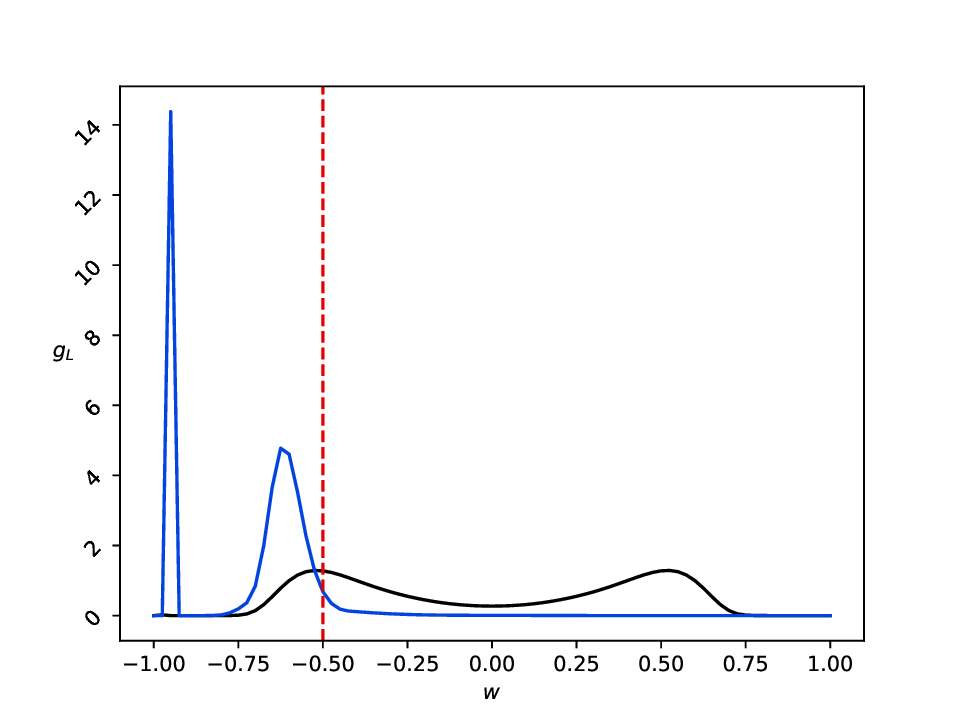}
		    \caption{Final time leader density.}
	    \end{subfigure}
		\hspace{0.04cm}
	    \begin{subfigure}[b]{0.5\linewidth}
		    \centering
		    \includegraphics[scale=0.4]{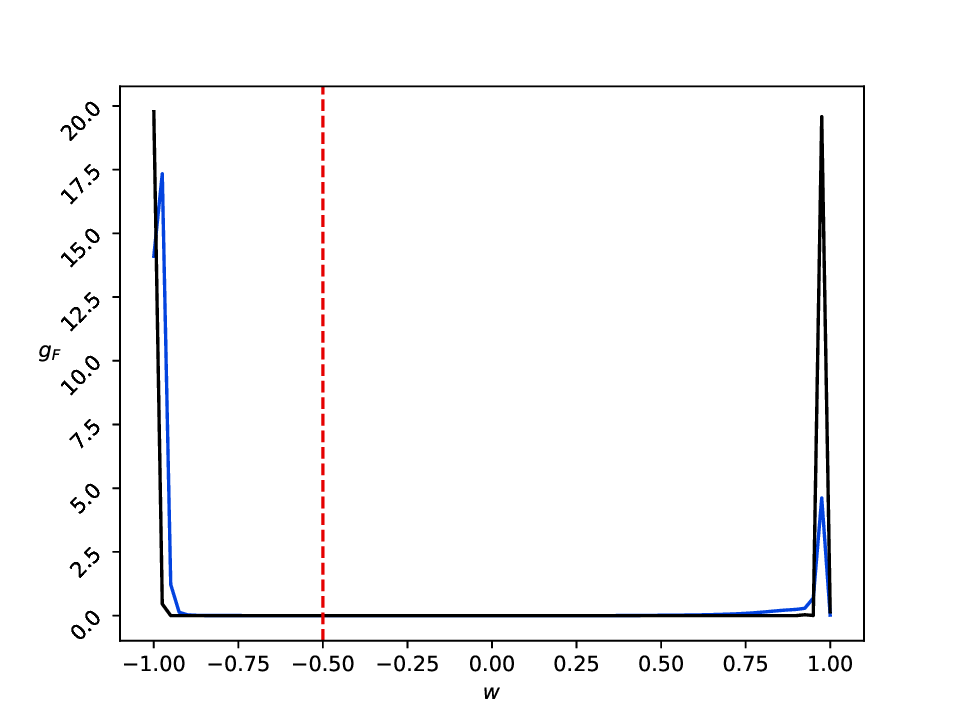}
		    \caption{Final time follower density.}
	    \end{subfigure} 
    
    	\begin{subfigure}[b]{0.5\linewidth}
    	\centering
	    	\includegraphics[scale=0.4]{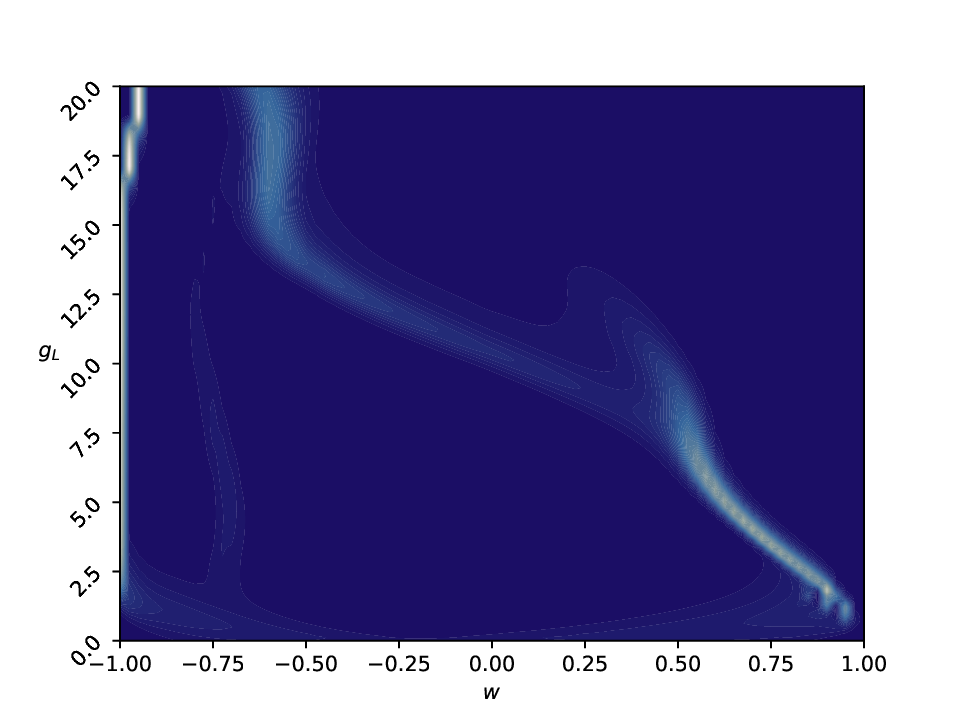}
		    \caption{Leader time evolution $g_L$.}
                        \label{sznleaderdynam}
	    \end{subfigure} 
	    \hspace{0.04cm}
	    \begin{subfigure}[b]{0.5\linewidth}
		    \centering
		    \includegraphics[scale=0.4]{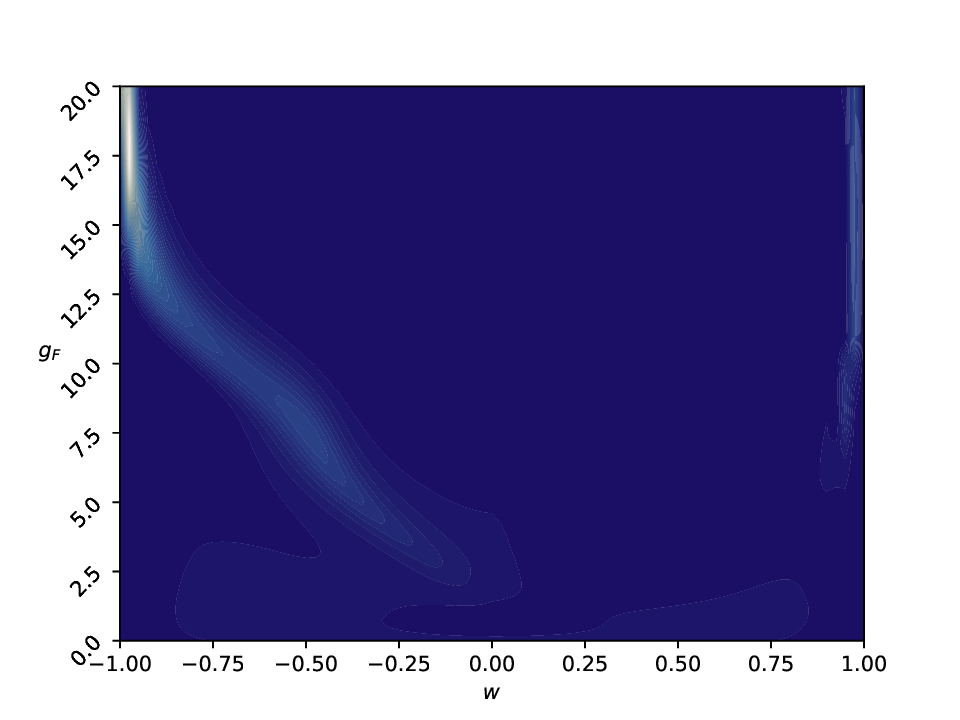}
            \caption{Follower time evolution $g_F$.}
	    \end{subfigure}
        
	    \hspace{0.25\linewidth}
        	    \begin{subfigure}[b]{0.5\linewidth}
		\centering
		    \includegraphics[scale=0.4]{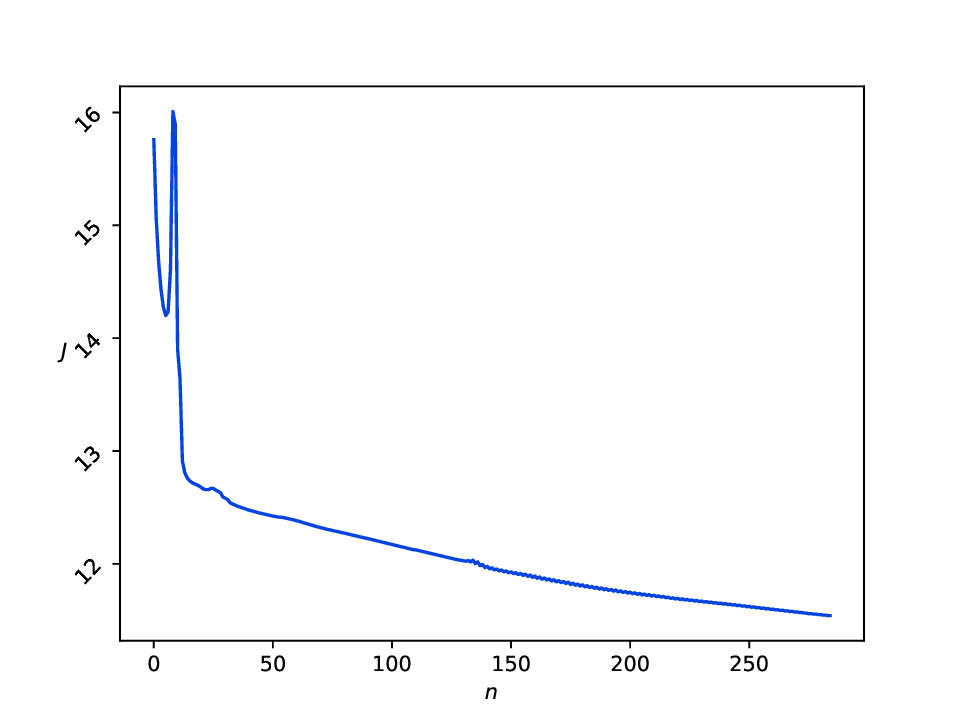}
		    \caption{Cost functional.}
            \label{centringres}
	    \end{subfigure}
        
	    \caption{The top set of graphs represent the final time states for the leader and follower species (solid blue), the uncontrolled dynamics (solid black) and the desired opinion $w_d$ according to the cost functional \eqref{centring} (dashed red). The middle set of graphs represent the top-down view of the time evolution of the leader and follower species, using the Sznajd-type interactions. The bottom graph is the values of the cost functional \eqref{centring} evaluated at each step of Algorithm~\ref{alg1}.}
	    \label{szncentringgraphs}
    \end{figure}

The results presented in Figure~\ref{szncentringgraphs} use the cost
functional \eqref{centring} and we can see that although the leader
species concentrates around $w_d=-0.5$ the follower species does not
vary from the uncontrolled dynamics by much. In these dynamics, we see
an increase in the early values for $J$ -- which is due to the uncontrolled nature of the follower species. The density of the leader species starts to approach $w_d$ at around sweeping iteration 6, however the follower density is still at both the extremes $+1$ and $-1$. As the leaders move from the positive half of the domain to the negative half, the followers that start with an opinion around $w=0$ move to the boundary at $-1$ rather than that at $+1$. This then leads to a new leader strategy to minimise in this new regime. This effect can be seen in the presented time evolution for the leader density, Figure~\ref{sznleaderdynam}, where in earlier time steps, a concentration of the leaders close to $+1$ causes the follower density to form a larger peak at the $-1$ boundary.

When using the cost functional \eqref{Fcentring} -- presented in Figure~\ref{sznFcentringgraphs} -- the leader strategy changes dramatically to cause the concentration of the followers about the desired opinion.

    \begin{figure}
	    \begin{subfigure}[b]{0.5\linewidth}
		\centering
		    \includegraphics[scale=0.4]{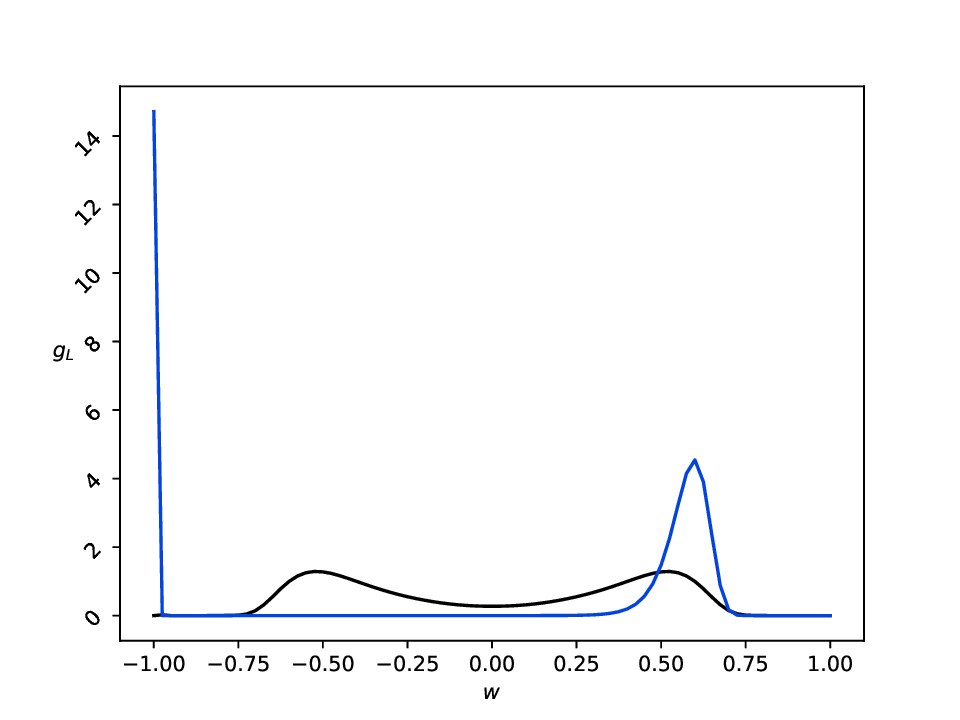}
		    \caption{Final time leader density.}
	    \end{subfigure}
		\hspace{0.04cm}
	    \begin{subfigure}[b]{0.5\linewidth}
		    \centering
		    \includegraphics[scale=0.4]{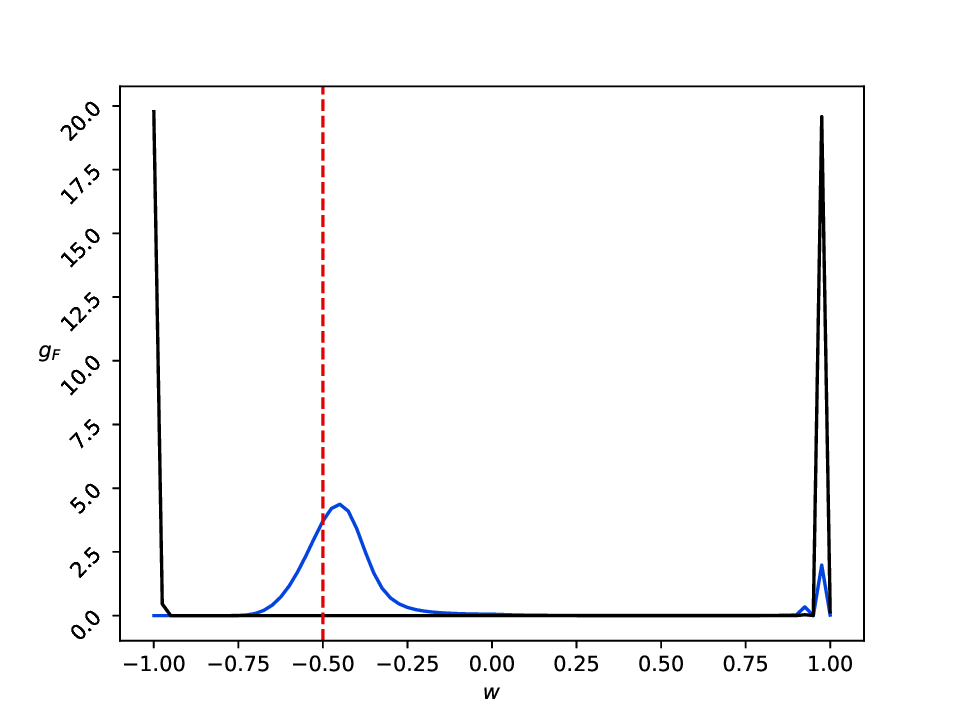}
		    \caption{Final time follower density.}
	    \end{subfigure} 
    
    	\begin{subfigure}[b]{0.5\linewidth}
    	\centering
	    	\includegraphics[scale=0.4]{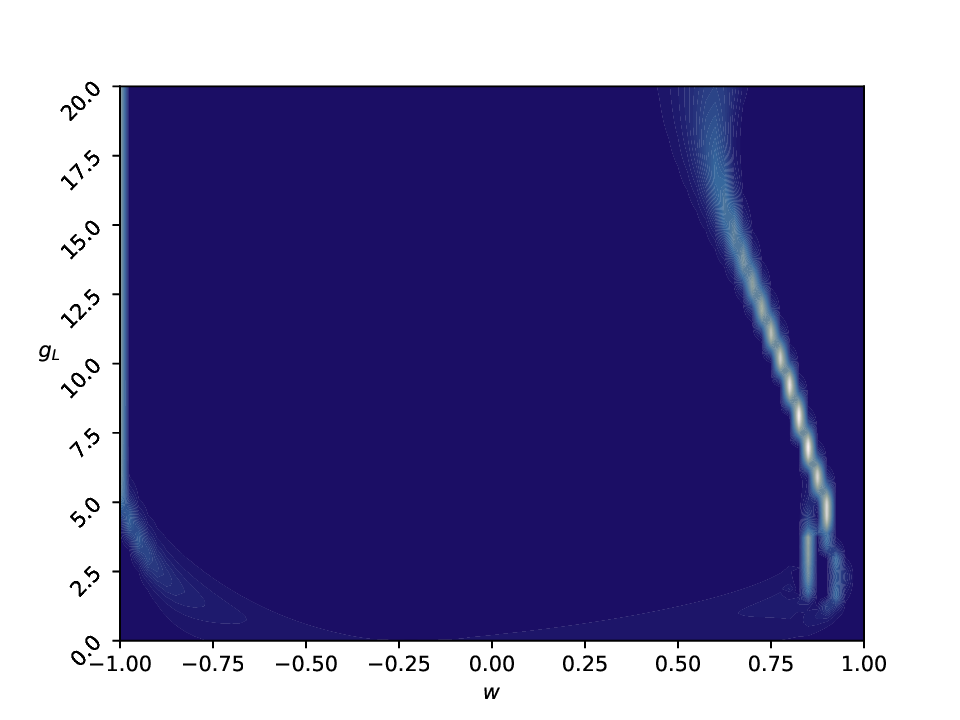}
		    \caption{Leader time evolution $g_L$.}
	    \end{subfigure} 
	    \hspace{0.04cm}
	    \begin{subfigure}[b]{0.5\linewidth}
		    \centering
		    \includegraphics[scale=0.4]{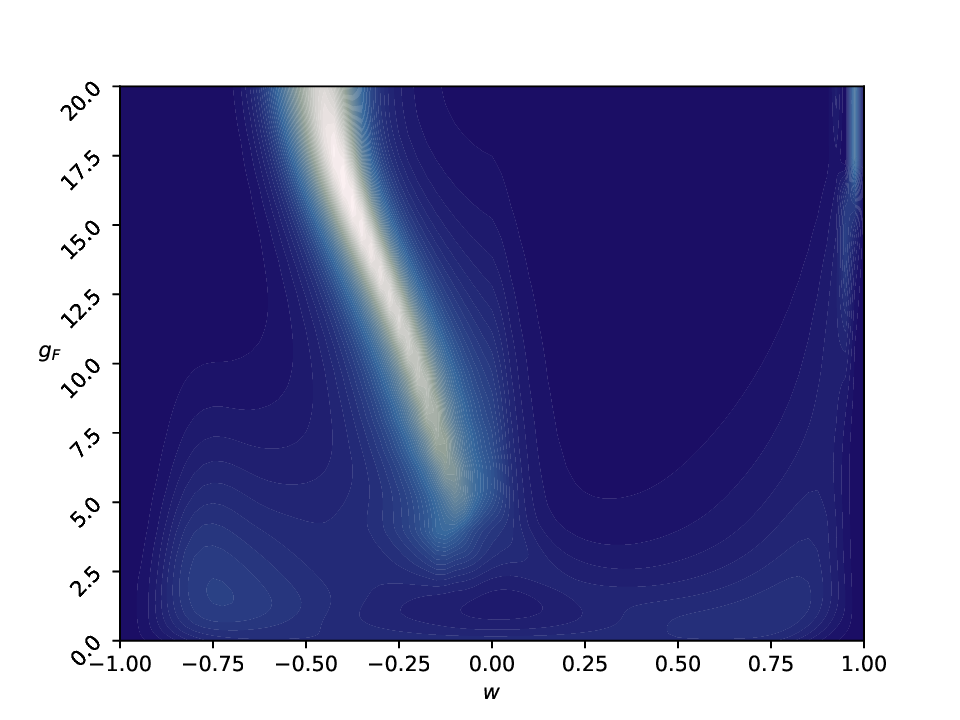}
            \caption{Follower time evolution $g_F$.}
	    \end{subfigure}
        
        	    \hspace{0.25\linewidth}
        	    \begin{subfigure}[b]{0.5\linewidth}
		\centering
		    \includegraphics[scale=0.4]{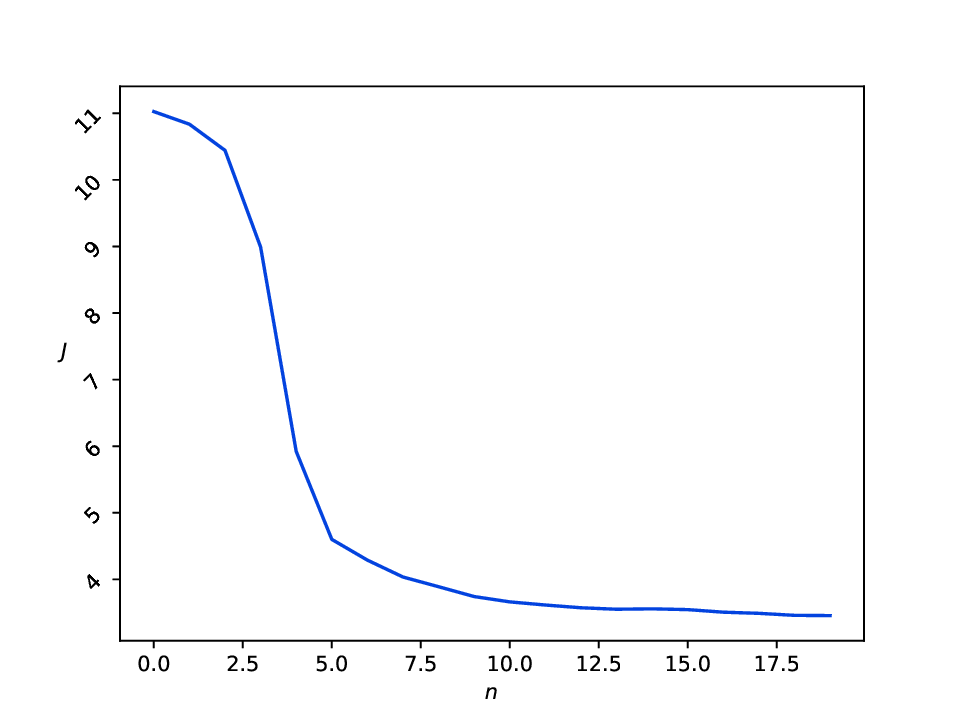}
		    \caption{Cost functional.}
	    \end{subfigure}
	    \caption{The top set of graphs represent the final time states for the leader and follower species (solid blue), the uncontrolled dynamics (solid black) and the desired opinion $w_d$ according to the cost functional \eqref{Fcentring} (dashed red). The middle set of graphs represent the top-down view of the time evolution of the leader and follower species using the Sznajd-type interactions. The bottom graph is the values of the cost functional \eqref{Fcentring} evaluated at each step of Algorithm~\ref{alg1}.}
	    \label{sznFcentringgraphs}
    \end{figure}

Finally, we look at the cost functional 
\begin{equation}\label{FT}
    J(\bg, u) = \frac{1}{2} \int_\I  \norm{\bg(T) - \bg_\I}^2\ \od w + \frac{\beta}{2} \int_{0}^T \int_{\I} |u|^2 \ \od w \od t,
\end{equation}
where we return to using the bounded confidence model, that is
$P_L=P_F = \Tilde{P} = P$ as in the first experiment. In the
experiment presented here, the target function is defined with
$\bg_\I(w) = \big(g_{\I}(w), g_{\I}(w)\big)$ where $g_\I$ is
constructed by setting specific values for the $w\in\{-0.5, 0, 0.5\}$
and then using a Lagrangian polynomial interpolation of these points
with a repeated root at $w=-1$ and $w=1$, i.e.\ $g_{\I}(-1) = g_{\I}(1) = 0$. The specific values for this experiment were chosen as $g_\I (-0.5) = 0.5$, $g_\I (0) = 0.25$ and $g_\I (0.5) = 1$.

    \begin{figure}
	    \begin{subfigure}[b]{0.5\linewidth}
		\centering
		    \includegraphics[scale=0.4]{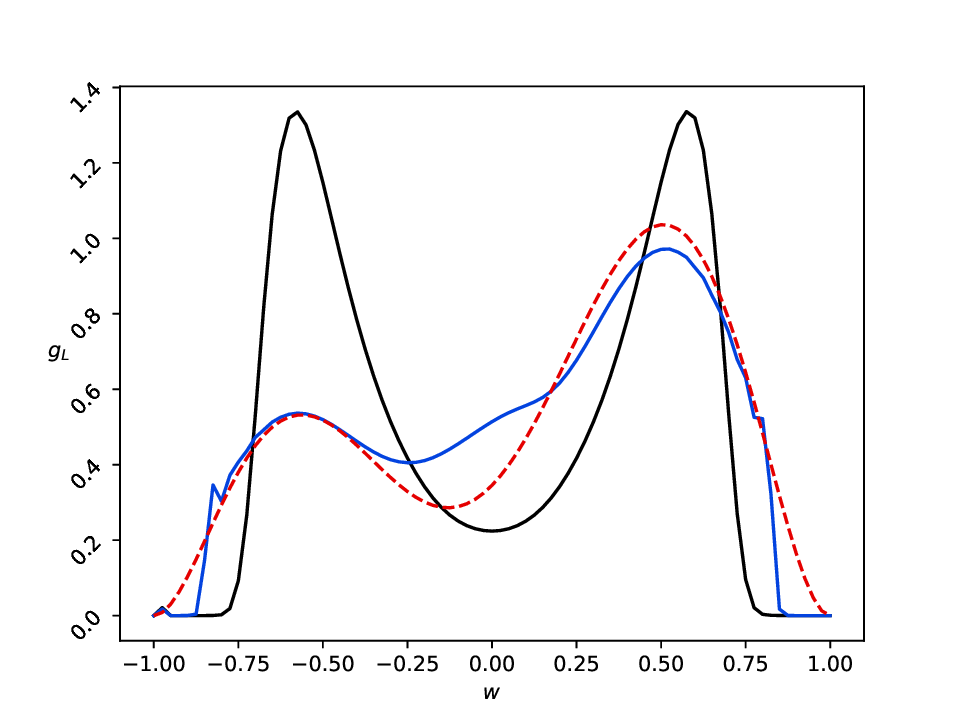}
		    \caption{Final time leader density.}
	    \end{subfigure}
		\hspace{0.04cm}
	    \begin{subfigure}[b]{0.5\linewidth}
		    \centering
		    \includegraphics[scale=0.4]{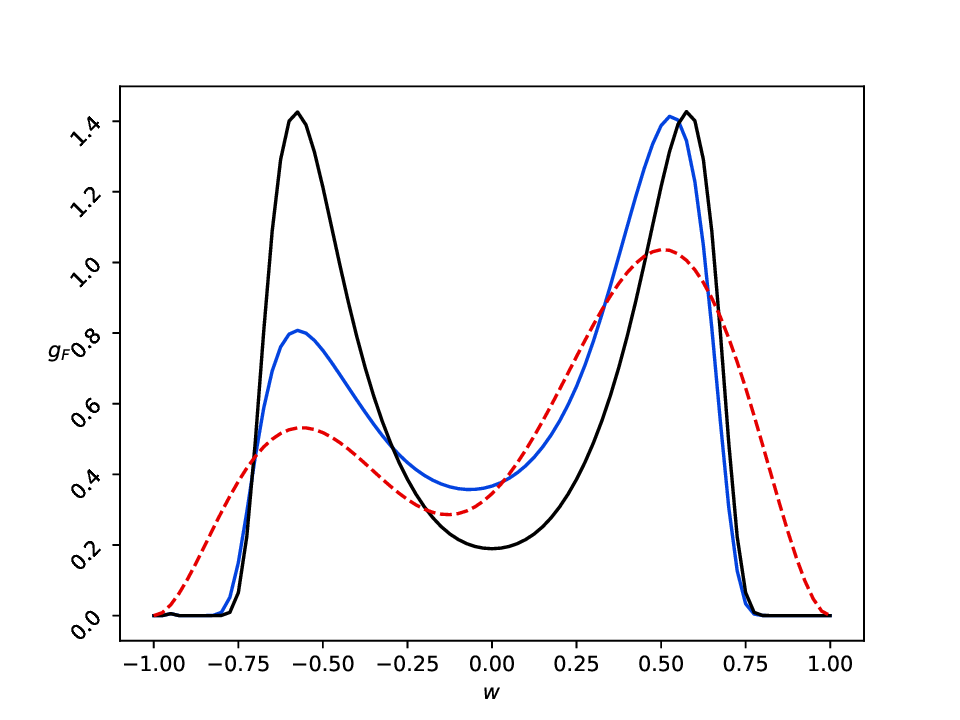}
		    \caption{Final time follower density.}
	    \end{subfigure} 
    
    	\begin{subfigure}[b]{0.5\linewidth}
    	\centering
	    	\includegraphics[scale=0.4]{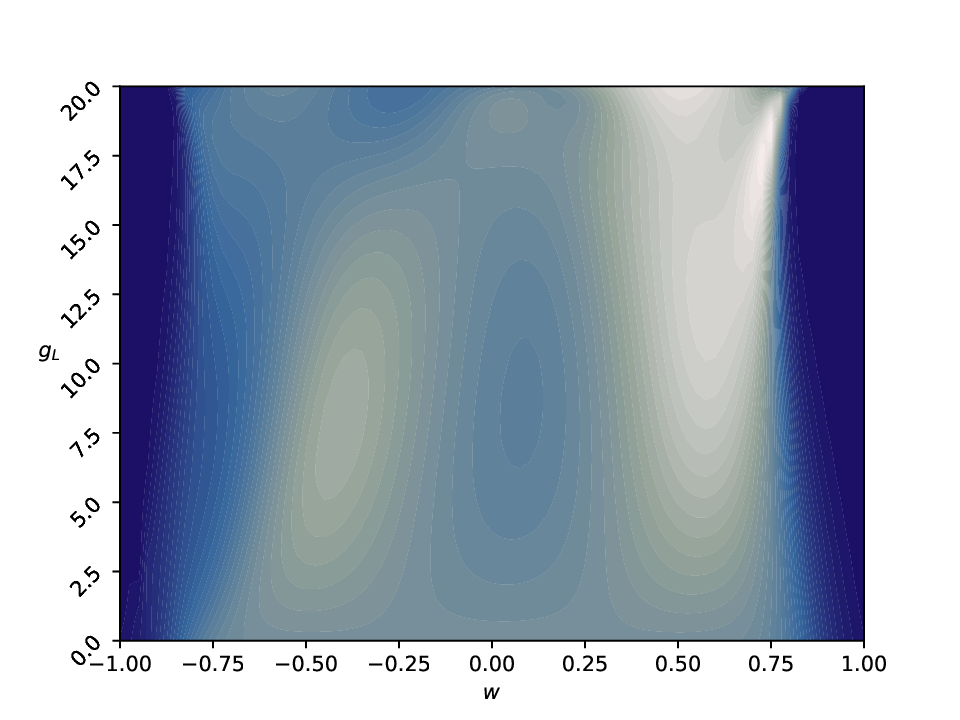}
		    \caption{Leader time evolution $g_L$.}
	    \end{subfigure} 
	    \hspace{0.04cm}
	    \begin{subfigure}[b]{0.5\linewidth}
		    \centering
		    \includegraphics[scale=0.4]{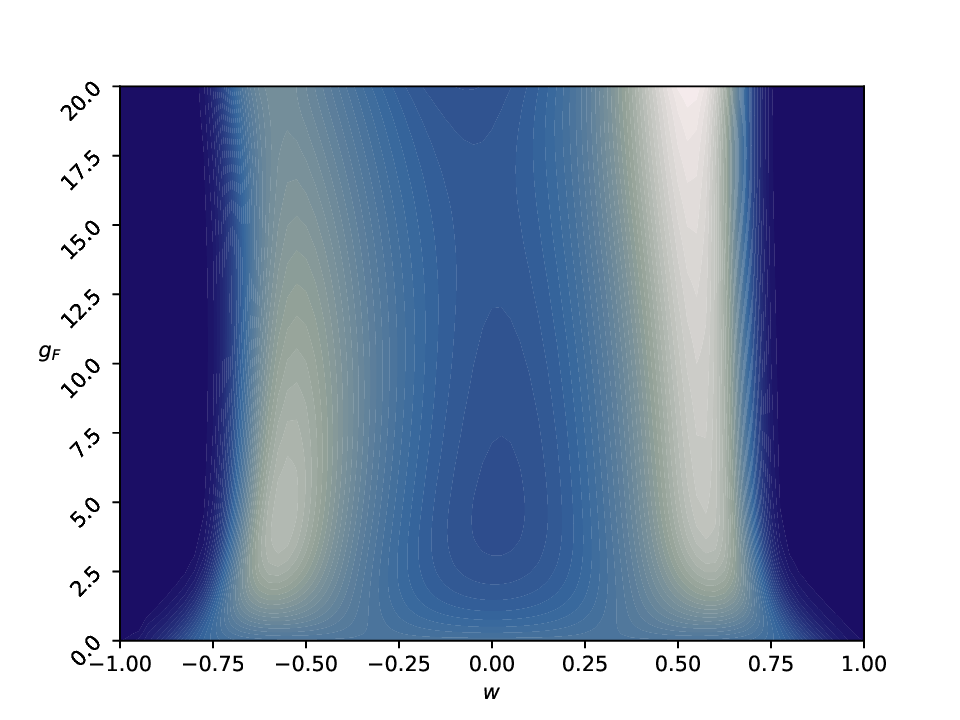}
            \caption{Follower time evolution $g_F$.}
	    \end{subfigure}
        
                	    \hspace{0.25\linewidth}
        	    \begin{subfigure}[b]{0.5\linewidth}
		\centering
		    \includegraphics[scale=0.4]{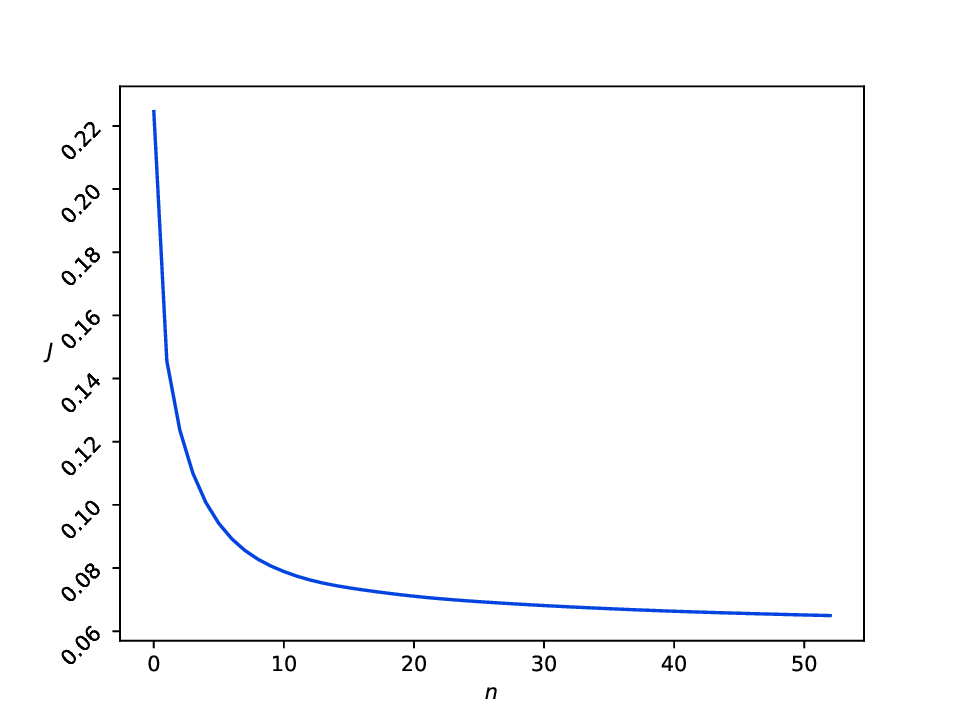}
		    \caption{Cost functional.}
	    \end{subfigure}
	    \caption{The top set of graphs represent the final time states for the leader and follower species (solid blue), the uncontrolled dynamics (solid black) and target state density, $\bg_\I$ according to the cost functional \eqref{FT} (dashed red). The middle set of graphs represent the top-down view of the time evolution of the leader and follower species. The bottom graph is the values of the cost functional \eqref{FT} evaluated at each step of Algorithm~\ref{alg1}.}
	    \label{FTgraphs}
    \end{figure}

As can be seen from Figure~\ref{FTgraphs}, the leader species changes dramatically from the uncontrolled dynamics to closer match the target density $g_\I$. Although a less drastic change has happened to the follower density, a clear asymmetry has appeared. Since the follower dynamics only see the control through their interactions with the leader species, we expect that the leader species would minimise the controlled dynamic, then the follower dynamic follows which reduces the total cost. In order to see a closer fit of the follower dynamics to the target, we again modify \eqref{FT} to use only the follower density,
\begin{equation}\label{FFT}
    J(\bg, u) = \frac{1}{2} \int_\I  \norm{g_F(T) - g_\I}^2\ \od w + \frac{\beta}{2} \int_{0}^T \int_{\I} |u|^2 \ \od w \od t.
\end{equation}

    \begin{figure}
	    \begin{subfigure}[b]{0.5\linewidth}
		\centering
		    \includegraphics[scale=0.4]{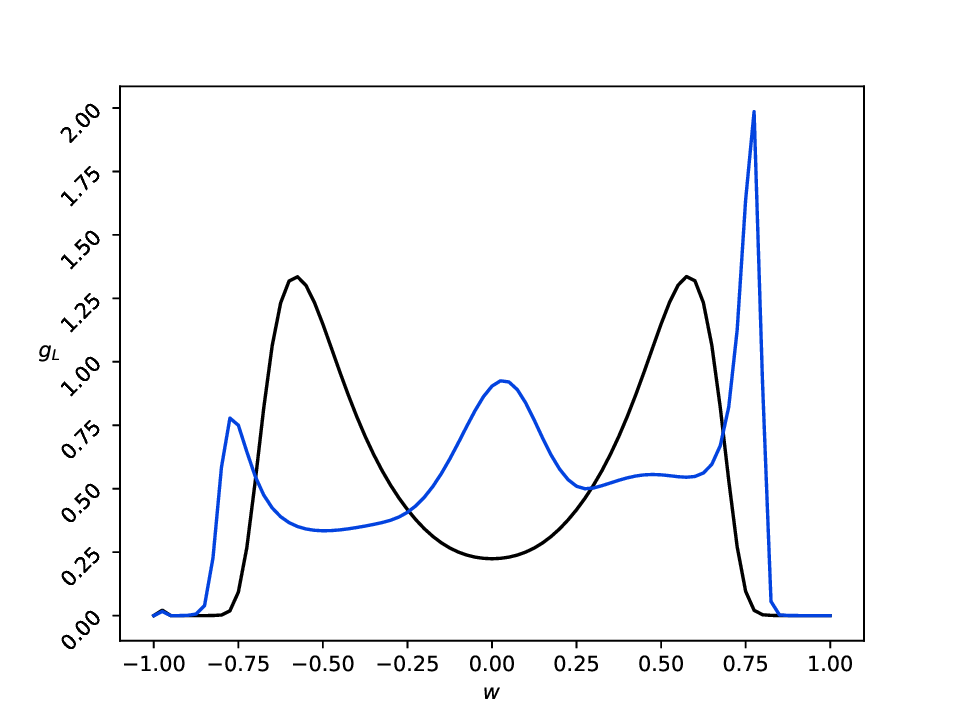}
		    \caption{Final time leader density.}
	    \end{subfigure}
		\hspace{0.04cm}
	    \begin{subfigure}[b]{0.5\linewidth}
		    \centering
		    \includegraphics[scale=0.4]{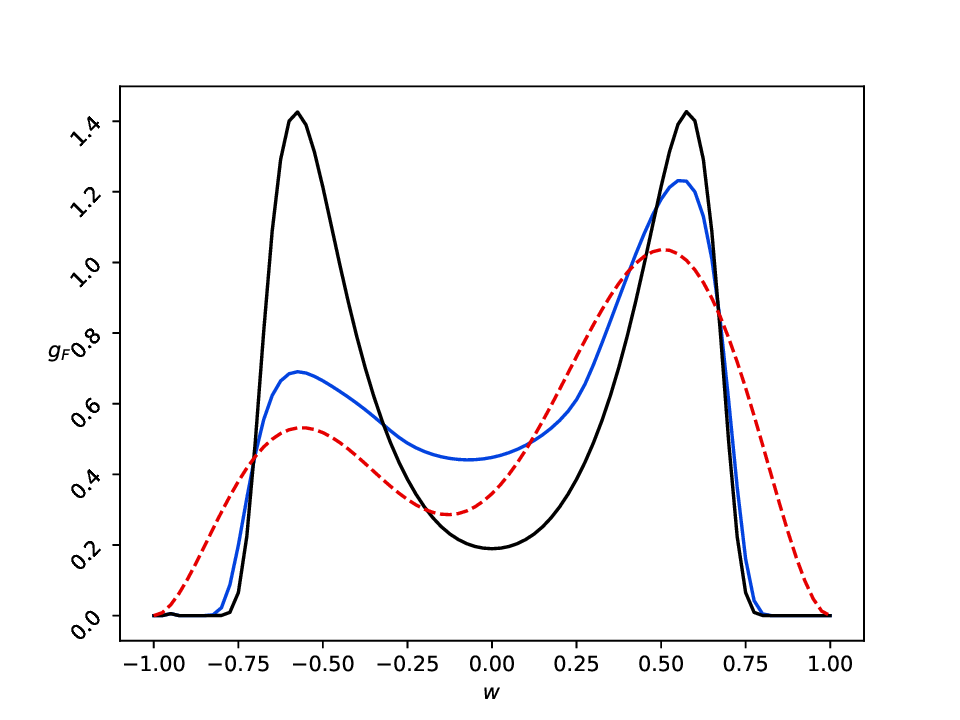}
		    \caption{Final time follower density.}
	    \end{subfigure} 
    
    	\begin{subfigure}[b]{0.5\linewidth}
    	\centering
	    	\includegraphics[scale=0.4]{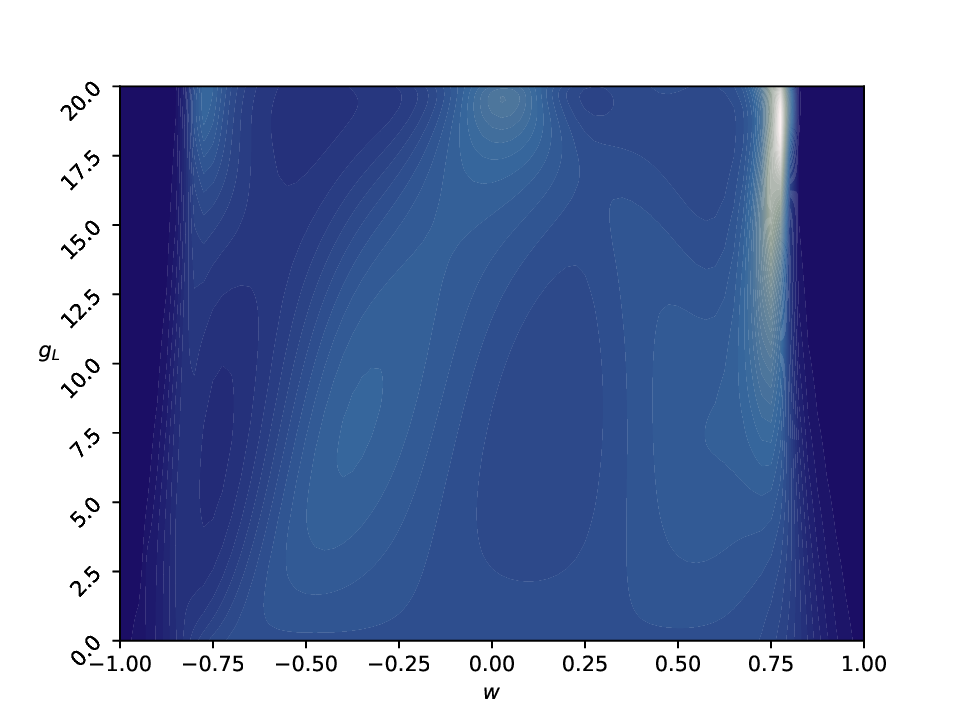}
		    \caption{Leader time evolution $g_L$.}
	    \end{subfigure} 
	    \hspace{0.04cm}
	    \begin{subfigure}[b]{0.5\linewidth}
		    \centering
		    \includegraphics[scale=0.4]{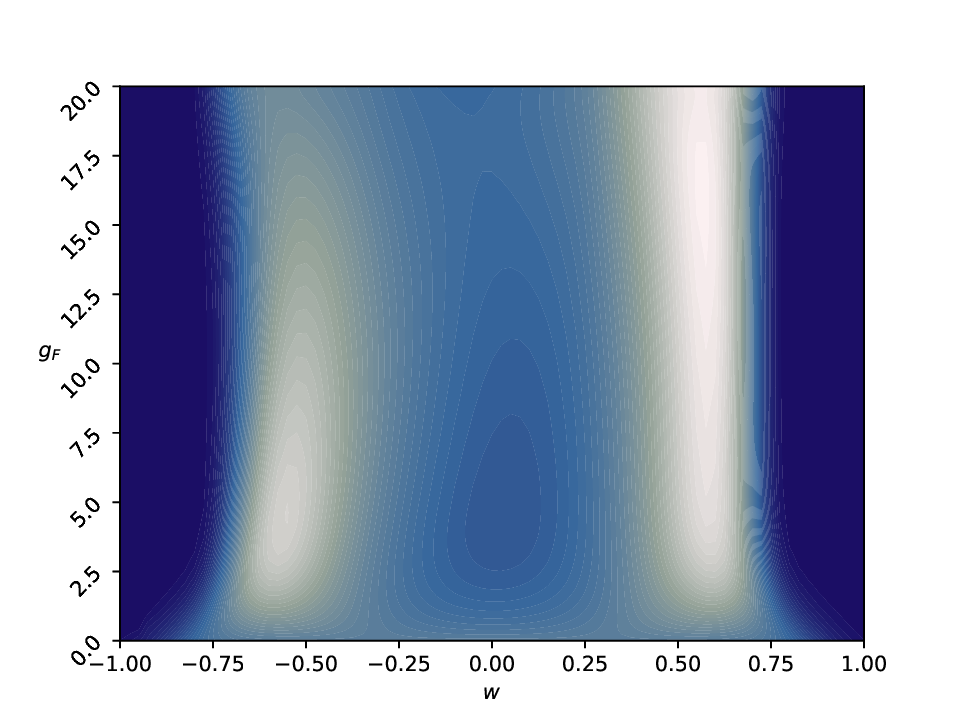}
            \caption{Follower time evolution $g_F$.}
	    \end{subfigure}
        
                        	    \hspace{0.25\linewidth}
        	    \begin{subfigure}[b]{0.5\linewidth}
		\centering
		    \includegraphics[scale=0.4]{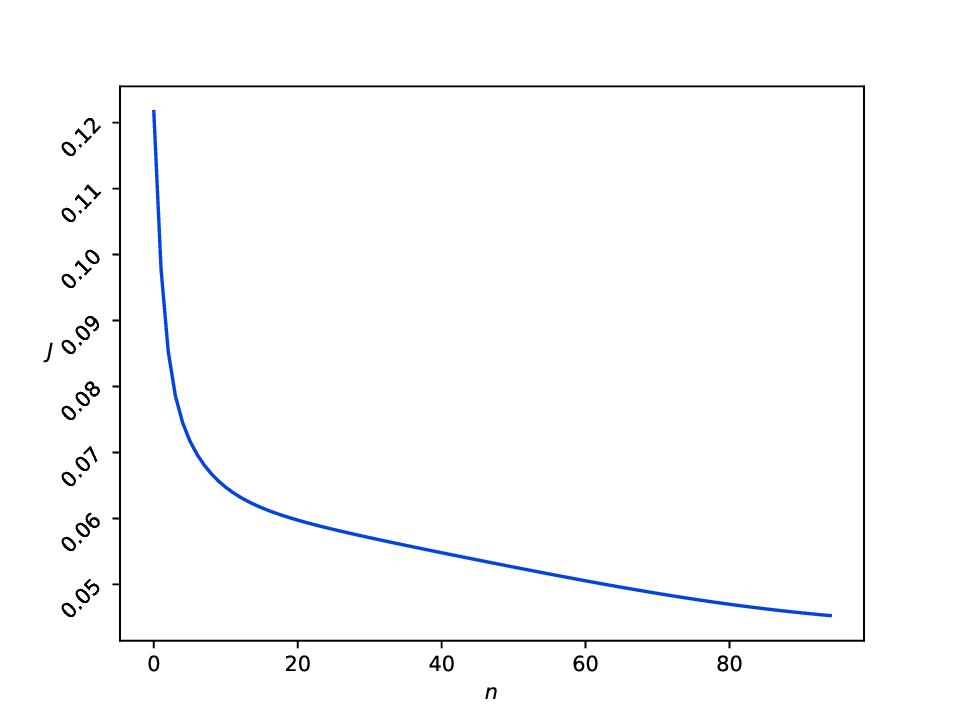}
		    \caption{Cost functional.}
	    \end{subfigure}
	    \caption{The top set of graphs represent the final time states for the leader and follower species (solid blue), the uncontrolled dynamics (solid black) and target state density, $\bg_\I$ according to the cost functional \eqref{FT} (dashed red). The middle set of graphs represent the top-down view of the time evolution of the leader and follower species. The bottom graph is the values of the cost functional \eqref{FFT} evaluated at each step of Algorithm~\ref{alg1}.}
	    \label{FFTgraphs}
    \end{figure}

Once again -- as presented in Figure~\ref{FFTgraphs} -- we see a large change in the leader density but a closer match of the follower dynamics to the desired final time density.


\section*{Declarations}

\begin{itemize}
\item Funding: This work was partially supported by a Royal Society International
Ex\-chan\-ges grant (Ref. IES/R3/213113). O.W.\ is supported by the Warwick
Mathematics Institute Centre for Doctoral Training in Mathematics, and gratefully acknowledges funding from the University of Warwick.
\item Conflict of interest: On behalf of all authors, the corresponding author states that there is no conflict of interest. 
\end{itemize}

\begin{appendices}

\section{Derivation of controlled Boltzmann system}\label{secA1}

We consider the evolution of the random variables that describe the opinion of an individual in the leader species $X_L$, in a the small time interval $[t, t+\Delta t]$ for some small $\Delta t$. We assume this time step is chosen such that $X_L$ interacts at most once, and the interactions that can occur will be of type \eqref{int:LL}, \eqref{int:FL} or $X_L$ does not interact in this time interval. We can now introduce the random variable, $T$, with probability mass function,
\begin{equation}\label{Boltz:pmf}
    \begin{split}
        \mathbb{P} \{ T = 0 \} &= \alpha \dt, \\
        \mathbb{P} \{ T = 1 \} &= \beta \dt, \\
        \mathbb{P} \{ T = 2 \} &= 1 -(\alpha + \beta) \dt,
    \end{split}
\end{equation}
with $\alpha, \beta > 0$ weighting the probability of interaction with the leader and follower respectively. Here $T=0$ represents the interaction \eqref{int:LL}, $T=1$ represents \eqref{int:FL} and $T=2$ represents no interaction. Labelling the other potential interacting individuals as $Y_L$ and $Y_F$ as the leader and follower respectively, we can write the opinion $X_L$ at time $t+\dt$ as,
\begin{equation} \label{Boltz:poteninteract}
        X_L(t+\Delta t) = \mb{1}_{\{0\}}(T)\, X^*_L[X_L, Y_L](t) + \mb{1}_{\{1\}}(T)\, X^*_L[X_L, Y_F](t) + \mb{1}_{\{2\}}(T)\, X_L(t),
\end{equation}
with $X^*_L[X_L, Y_L]$ representing the position of $X_L$ under the interaction \eqref{int:LL}, and $X_L^*[X_L,Y_F]$ the position of $X_L$ under the interaction \eqref{int:FL}. Note that in our case $X_L^*[X_L, Y_F] = X_L$.

Now considering $\mnphi{X_L(t + \dt)}$ -- where $\mean{\cdot}$ is the mean with respect to $X_L$, $Y_L$, $Y_F$ and $\eta_L$ and $\phi \in C_c^{2,\delta}(\I;\real)$ -- and using \eqref{Boltz:pmf}, we obtain,
\begin{equation*}
    \begin{split}
        \mnphi{X_L(t + \dt} =& \bigg\langle \phi\bigg( \mb{1}_{\{0\}}(T)\, X^*_L[X_L, Y_L](t) + \mb{1}_{\{1\}}(T)\,X^*_L[X_L, Y_F](t)\\
        \quad &+ \mb{1}_{\{2\}}(T)\, X_L(t)\bigg\rangle, \\[5pt]
        =& \alpha \dt \mnphi{X_L^*[X_L, Y_L]} + \beta \dt \mnphi{X_L^*[X_L,Y_F]} \\
        &\quad \mnphi{X_L} - (\alpha + \beta)\dt \mnphi{X_L},
    \end{split}
\end{equation*}
through straightforward calculation, we get,
\begin{equation*}
\begin{split}
    \frac{\mean{\phi(X_L(t + \dt) - \phi(X_L(t))}}{\dt} &= \alpha \mean{\phi(X_L^*[X_L, Y_L]) - \phi(X_L)} \\
    & \quad + \beta \mean{\phi(X_L^*[X_L,X_i]) - \phi(X_L)},
\end{split}
\end{equation*}
we now apply the limit as $\dt \rightarrow 0$,
\begin{equation} \label{Boltz:leadx} 
    \deriv{\mnphi{X_L}}{t} = \varepsilon \mean{\phi(X_L^*[X_L, Y_L]) - \phi(X_L)} + \beta \mean{\phi(X_L^*[X_L,Y_F]) - \phi(X_L)}.
\end{equation}
Note here that the individual $Y_L$ is also a member of the leader species and thus the density which describes the distribution of $X_L$ also describes that of $Y_L$. Writing the expectations of $X_L$, $Y_L$ and $Y_F$ explicitly yields the weak form of the Boltzmann-type equation for the leader species,
\begin{equation} \label{Boltz:weaklead}
    \begin{split}
        \nderiv{t}\left(\int_{\I} \phi(w) f_L(w,t) \ \od w\right) =& \alpha\Big \langle \int_{\I^2}\big[ \phi(w^*) + \phi(v^*) - \phi(w) - \phi(v) \big] \\
        &\qquad \qquad \times  f_L(w,t) f_L(v,t) \ \od w \od v \Big \rangle \\
        &+\beta \Big \langle \int_{\I^2} \big[ \phi(w^*) - \phi(w)\big] \\
        &\qquad \qquad\times f_L(w,t) f_F(v,t) \ \od w \od v \Big \rangle.
    \end{split}
\end{equation}
Notice that the second term on the right hand side of \eqref{Boltz:weaklead} will cancel out, since the post-interaction opinion of the leader with a follower results in no change to the leader opinion, that is $\phi(w^*) - \phi(w) = 0$. We can then write the strong equation,
\begin{equation} \label{Boltz:stronglead}
    \npderiv{t} f_L(w,t) = \frac{\alpha}{2} Q_L[f_L, f_L](w,t),
\end{equation}
defining the $Q_L$ operator as the strong form of the remaining term on right hand side of \eqref{Boltz:weaklead}.

Using a slight modification of the above method for the follower species, we derive the remaining equations of the system,
\begin{equation}
\begin{split}
    \deriv{\mnphi{X_F}}{t} &= a \mean{\phi(X_F^*[X_F, Y_L]) - \phi(X_F)} \\
    & \qquad + b \mean{\phi(X_F^*[X_F,Y_F]) - \phi(X_F)}, \label{Boltz:followx}
\end{split}
\end{equation}
Once again, since $X_F$ and $Y_F$ are both members of the follower species, the densities which describe their distributions are the same, writing the expectation in \eqref{Boltz:followx} explicitly, we obtain the weak form of the Boltzmann-type equation for the follower species,
\begin{equation} \label{Boltz:weakfollow}
    \begin{split}
        \nderiv{t}\left(\int_{\I} \phi(w) f_F(w,t) \ \od w\right) =& a\Big \langle \int_{\I^2}\big[ \phi(w^*) + \phi(v^*) - \phi(w) - \phi(v) \big] \\
        & \qquad \qquad \times f_F(w,t) f_F(v,t) \ \od w \od v \Big \rangle \\
        &+ b \Big \langle \int_{\I^2} \big[ \phi(w^*) - \phi(w)\big] \\
        & \qquad \qquad \times f_F(w,t) f_L(v,t) \ \od w \od v \Big \rangle,
    \end{split}
\end{equation}
with the strong form of this equation,
    \begin{equation}\label{Boltz:strongfollow}
        \npderiv{t}f_F(w, t) = \frac{a}{2} Q_{F}[f_F, f_F](w,t) + \frac{b}{2} Q_{FL}[f_F, f_{FL}](w,t).
    \end{equation}
    The interaction operators $Q_L$, $Q_{FL}$, and $Q_F$ in \eqref{Boltz:stronglead} and \eqref{Boltz:strongfollow}, are expressed in weak form as,
    \begin{align*}
        \int_{\I} Q_L[f_L, f_L](w,t) \phi(w) \ \od w =& \Big \langle \int_{\I^2} \big[ \phi(w^*) + \phi(v^*) - \phi(w) - \phi(v)\big] \\
        & \qquad \qquad \times f_L(w,t) f_L(v,t) \ \od w \od v \Big \rangle, \\
        \int_{\I} Q_{FL}[f_F, f_L](w,t) \phi(w) \ \od w =& \Big \langle \int_{\I^2} \big[ \phi(w^*) - \phi(w)\big]  f_F(w,t) f_L(v,t) \ \od w \od v \Big \rangle,\\
        \int_{\I} Q_F[f_F, f_F](w,t) \phi(w) \ \od w =& \Big \langle \int_{\I^2} \big[ \phi(w^*) + \phi(v^*) - \phi(w) - \phi(v) \big] \\
        & \qquad \qquad \times f_F(w,t) f_F(v,t) \ \od w \od v \Big \rangle,
    \end{align*}
    where $\phi$ are test functions from $C_c^{2,\delta}(\I;\real)$, that are continuous and compactly supported functions on $\I$. In \eqref{Boltz:stronglead} and \eqref{Boltz:strongfollow}, the values of $\alpha$, $a$, $b$ denote the interaction frequencies, which we can write as relaxation times of the Boltzmann-type equations using $\alpha = 1/\tau_{LL}$, $a = 1/\tau_{FF}$ and $b = 1/\tau_{FL}$.

\section{Derivation of controlled Fokker-Planck system}\label{secA2}
 We derive the coupled system of non-linear, non-local Fokker-Planck equations for this model, starting with the leader equation by considering the first order Taylor's expansion of $\phi(w^*)$ about $w$,
    \begin{equation} \label{Fokker:taylor} 
    \begin{split}
        \mean{\phi(w^*)- \phi(w)} =& - \gamma_L \phi'(w) P_{LL}(v,w)(w-v) + \frac{\phi'' (w)}{2} D^2(w) \sigma^2 \\ 
        &+\mean{\zeta(w,v,t) + R(w,v,t)}.
    \end{split}
    \end{equation}
    with $\zeta$ and $R$ defined as,
    \begin{align*}
        \zeta(w,v,t) =& \tfrac{1}{2} \phi '' (w) \bigg(\gamma_L^2 P_{LL}^2(v,w)(w-v)^2 + \frac{\gamma_L^2}{4} u^2(w,t) + \frac{\gamma_L^2}{2} P_{LL}(v,w) (w-v) u(w, t)\bigg), \\
        R(w,v,t) =& \frac{1}{2} \bigg(\phi '' (\Tilde{w})(w^*-w)^2 - \phi '' (w)(w^*-w)^2\bigg).
    \end{align*}
    where $\Tilde{w} = \kappa w + (1-\kappa)w^*$ for $\kappa \in [0,1]$. Applying the same Taylor's expansion for $\phi(v^*)$ about $v$, substituting into $Q_L$ and then iterating the integral on the right hand side yields,
\begin{equation*} 
    \begin{split}
        \int_{\I} Q_L[f_L, f_L](w,t) \phi(w) \ \od w =& -2 \gamma_L \int_\I \phi ' (w) \mc{K}[f_L](w,t) f_L(w,t) \ \od w \\
        & - \gamma_L \int_\I \phi ' (w) u(w,t) f_L(w,t) \ \od w \\
        &+ 2\sigma^2 \int_\I \frac{\phi '' (w)}{2} D^2(w) f_L(w,t) \ \od w \\
        &+ 2 \int_\I{\mean{\zeta(w,v,t) + R(w,v,t)}} \\
        & \qquad \qquad \times f_L(w,t) f_L(v,t) \ \od w \od v,
    \end{split}
\end{equation*}
    where,
    \begin{equation*}
        \mc{K}[f_L](w,s) := \int_\I P_L(w,v)(w-v)f_L(v,s) \ \od v.
    \end{equation*}
    Now considering the time-scaling $s=\gamma_L t$ and the
    transformed density functions $g_L(w,s) = f_L(w,t)$ and taking the
    quasi-invariant limit of \eqref{Boltz:stronglead} -- that is
    $\sigma, \gamma_L \rightarrow 0$ while keeping the value
    $\sigma^2/\gamma_L$ constant throughout.
Notice that under the time scaling and quasi-invariant limit, $R$
    goes to zero (see \cite{bib:During:strongleaders}) and in the $\zeta$ term, we still have a factor of $\gamma_L$ even after dividing through by $\gamma_L$ thus this term also goes to zero.
    We then write
    the strong form of the Fokker-Planck-type equation for the leader species,
    \begin{equation*} 
        \begin{split}
            \pderiv{g_L}{s}(w, s) =& \npderiv{w} \left( \left( \frac{1}{\tau_{LL}} \mc{K}[g_L](w,s) + \frac{1}{2\tau_{LL}} u(w,s)\right) g_L(w,s) \right)\\
            &+ \frac{\lambda_{L} }{2\tau_{LL}}\nsecpderiv{w} (D^2(w) g_L(w,s)),
        \end{split}
      \end{equation*}
      with $\lambda_L =
      M_L\sigma^2/\gamma_L$ and $M_L=\int_{\I} f_L(w,t)\,dw$, subject to no-flow boundary conditions.

    Applying the same process for \eqref{Boltz:strongfollow}, with the
    scaling $\gamma_L = \alpha_{LF} \gamma_F$ and letting $\lambda_F =
      M_F\sigma^2/\gamma_F$ with $M_F=\int_{\I} f_F(w,t)\,dw$, yields the follower Fokker-Planck-type equation,
    \begin{equation*} 
        \begin{split}
            \pderiv{g_F}{s}(w,s) =& \alpha_{LF} \npderiv{w} \left(\left(\frac{1}{2\tau_{FL}} \mc{M}[g_L](w,s)+ \frac{1}{\tau_{FF}}\mc{N}[g_F](w,s) \right)g_F (w,s)\right) \\
            &+  \left(\frac{\lambda_{F}}{4\tau_{LF}} + \frac{\lambda_{F}}{2\tau_{FF}}\right) \nsecpderiv{w} (D^2(w) g_F(w,s)),    
        \end{split}
    \end{equation*}
     subject to no-flow boundary conditions, where:
    \begin{align*}
         \mc{M}[g_L](w,s) :=& \int_\I \Tilde{P}(w,v)(w-v)g_L(v,s) \ \od v,\\
         \mc{N}[g_F](w,s) :=& \int_\I P_F(w,v)(w-v)g_F(v,s) \ \od v.
    \end{align*}

\end{appendices}

\bibliography{bdbib}


\begin{thebibliography}{27}
\ifx \bisbn   \undefined \def \bisbn  #1{ISBN #1}\fi
\ifx \binits  \undefined \def \binits#1{#1}\fi
\ifx \bauthor  \undefined \def \bauthor#1{#1}\fi
\ifx \batitle  \undefined \def \batitle#1{#1}\fi
\ifx \bjtitle  \undefined \def \bjtitle#1{#1}\fi
\ifx \bvolume  \undefined \def \bvolume#1{\textbf{#1}}\fi
\ifx \byear  \undefined \def \byear#1{#1}\fi
\ifx \bissue  \undefined \def \bissue#1{#1}\fi
\ifx \bfpage  \undefined \def \bfpage#1{#1}\fi
\ifx \blpage  \undefined \def \blpage #1{#1}\fi
\ifx \burl  \undefined \def \burl#1{\textsf{#1}}\fi
\ifx \doiurl  \undefined \def \doiurl#1{\url{https://doi.org/#1}}\fi
\ifx \betal  \undefined \def \betal{\textit{et al.}}\fi
\ifx \binstitute  \undefined \def \binstitute#1{#1}\fi
\ifx \binstitutionaled  \undefined \def \binstitutionaled#1{#1}\fi
\ifx \bctitle  \undefined \def \bctitle#1{#1}\fi
\ifx \beditor  \undefined \def \beditor#1{#1}\fi
\ifx \bpublisher  \undefined \def \bpublisher#1{#1}\fi
\ifx \bbtitle  \undefined \def \bbtitle#1{#1}\fi
\ifx \bedition  \undefined \def \bedition#1{#1}\fi
\ifx \bseriesno  \undefined \def \bseriesno#1{#1}\fi
\ifx \blocation  \undefined \def \blocation#1{#1}\fi
\ifx \bsertitle  \undefined \def \bsertitle#1{#1}\fi
\ifx \bsnm \undefined \def \bsnm#1{#1}\fi
\ifx \bsuffix \undefined \def \bsuffix#1{#1}\fi
\ifx \bparticle \undefined \def \bparticle#1{#1}\fi
\ifx \barticle \undefined \def \barticle#1{#1}\fi
\bibcommenthead
\ifx \bconfdate \undefined \def \bconfdate #1{#1}\fi
\ifx \botherref \undefined \def \botherref #1{#1}\fi
\ifx \url \undefined \def \url#1{\textsf{#1}}\fi
\ifx \bchapter \undefined \def \bchapter#1{#1}\fi
\ifx \bbook \undefined \def \bbook#1{#1}\fi
\ifx \bcomment \undefined \def \bcomment#1{#1}\fi
\ifx \oauthor \undefined \def \oauthor#1{#1}\fi
\ifx \citeauthoryear \undefined \def \citeauthoryear#1{#1}\fi
\ifx \endbibitem  \undefined \def \endbibitem {}\fi
\ifx \bconflocation  \undefined \def \bconflocation#1{#1}\fi
\ifx \arxivurl  \undefined \def \arxivurl#1{\textsf{#1}}\fi
\csname PreBibitemsHook\endcsname

\bibitem[\protect\citeauthoryear{Düring
  et~al.}{2009}]{bib:During:strongleaders}
\begin{barticle}
\bauthor{\bsnm{Düring}, \binits{B.}},
\bauthor{\bsnm{Markowich}, \binits{P.}},
\bauthor{\bsnm{Pietschmann}, \binits{J.}},
\bauthor{\bsnm{Wolfram}, \binits{M.}}:
\batitle{{B}oltzmann and {F}okker-{P}lanck equations modelling opinion
  formation in the presence of strong leaders}.
\bjtitle{Proc. R. Soc. A.}
\bvolume{465},
\bfpage{3687}--\blpage{3708}
(\byear{2009})
\doiurl{10.1098/rspa.2009.0239}
\end{barticle}
\endbibitem

\bibitem[\protect\citeauthoryear{Fornasier and
  Solombrino}{2014}]{fornasier2014mean}
\begin{barticle}
\bauthor{\bsnm{Fornasier}, \binits{M.}},
\bauthor{\bsnm{Solombrino}, \binits{F.}}:
\batitle{Mean-field optimal control}.
\bjtitle{ESAIM: Control, Optimisation and Calculus of Variations}
\bvolume{20}(\bissue{4}),
\bfpage{1123}--\blpage{1152}
(\byear{2014})
\doiurl{10.1051/cocv/2014009}
\end{barticle}
\endbibitem

\bibitem[\protect\citeauthoryear{Albi et~al.}{2017}]{bib:Albi:meancontrol}
\begin{barticle}
\bauthor{\bsnm{Albi}, \binits{G.}},
\bauthor{\bsnm{Choi}, \binits{Y.-P.}},
\bauthor{\bsnm{Fornasier}, \binits{M.}},
\bauthor{\bsnm{Kalise}, \binits{D.}}:
\batitle{Mean field control hierarchy}.
\bjtitle{Applied Mathematics \& Optimization}
\bvolume{76}(\bissue{1}),
\bfpage{93}--\blpage{135}
(\byear{2017})
\doiurl{10.1007/s00245-017-9429-x}
\end{barticle}
\endbibitem

\bibitem[\protect\citeauthoryear{Bertotti and
  Delitala}{2008}]{bertotti2008discrete}
\begin{barticle}
\bauthor{\bsnm{Bertotti}, \binits{M.L.}},
\bauthor{\bsnm{Delitala}, \binits{M.}}:
\batitle{On a discrete generalized kinetic approach for modelling persuader’s
  influence in opinion formation processes}.
\bjtitle{Math. comput. model.}
\bvolume{48}(\bissue{7-8}),
\bfpage{1107}--\blpage{1121}
(\byear{2008})
\doiurl{10.1016/j.mcm.2007.12.021}
\end{barticle}
\endbibitem

\bibitem[\protect\citeauthoryear{Düring and
  Wolfram}{2015}]{bib:During:inhomogeneous}
\begin{barticle}
\bauthor{\bsnm{Düring}, \binits{B.}},
\bauthor{\bsnm{Wolfram}, \binits{M.}}:
\batitle{Opinion dynamics: inhomogeneous {B}oltzmann-type equations modelling
  opinion leadership and political segregation}.
\bjtitle{Proc. R. Soc. A.}
\bvolume{471},
\bfpage{20150345}
(\byear{2015})
\doiurl{10.1098/rspa.2015.0345}
\end{barticle}
\endbibitem

\bibitem[\protect\citeauthoryear{Hinze et~al.}{2008}]{hinze2008optimization}
\begin{bbook}
\bauthor{\bsnm{Hinze}, \binits{M.}},
\bauthor{\bsnm{Pinnau}, \binits{R.}},
\bauthor{\bsnm{Ulbrich}, \binits{M.}},
\bauthor{\bsnm{Ulbrich}, \binits{S.}}:
\bbtitle{Optimization with PDE Constraints}.
\bsertitle{Mathematical Modelling: Theory and Applications},
vol. \bseriesno{23}.
\bpublisher{Springer},
\blocation{Berlin, Heidelberg}
(\byear{2008}).
\doiurl{10.1007/978-1-4020-8839-1}
\end{bbook}
\endbibitem

\bibitem[\protect\citeauthoryear{Tr{\"o}ltzsch}{2010}]{troltzsch2010optimal}
\begin{bbook}
\bauthor{\bsnm{Tr{\"o}ltzsch}, \binits{F.}}:
\bbtitle{Optimal Control of Partial Differential Equations: Theory, Methods,
  and Applications}.
\bsertitle{Graduate Studies in Mathematics},
vol. \bseriesno{112}.
\bpublisher{American Mathematical Soc.},
\blocation{Providence, Rhode Island}
(\byear{2010}).
\doiurl{10.1090/gsm/112}
\end{bbook}
\endbibitem

\bibitem[\protect\citeauthoryear{Albi
  et~al.}{2014}]{bib:AlbiPareschiZanella:MPC}
\begin{barticle}
\bauthor{\bsnm{Albi}, \binits{G.}},
\bauthor{\bsnm{Pareschi}, \binits{L.}},
\bauthor{\bsnm{Zanella}, \binits{M.}}:
\batitle{Boltzmann-type control of opinion consensus through leaders}.
\bjtitle{Phil. Trans. R. Soc. A}
(\byear{2014})
\doiurl{10.1098/rsta.2014.0138}
\end{barticle}
\endbibitem

\bibitem[\protect\citeauthoryear{Wongkaew et~al.}{2015}]{wongkaew2015control}
\begin{barticle}
\bauthor{\bsnm{Wongkaew}, \binits{S.}},
\bauthor{\bsnm{Caponigro}, \binits{M.}},
\bauthor{\bsnm{Borzi}, \binits{A.}}:
\batitle{On the control through leadership of the {H}egselmann--{K}rause
  opinion formation model}.
\bjtitle{Mathematical Models and Methods in Applied Sciences}
\bvolume{25}(\bissue{03}),
\bfpage{565}--\blpage{585}
(\byear{2015})
\doiurl{10.1142/S0218202515400060}
\end{barticle}
\endbibitem

\bibitem[\protect\citeauthoryear{Borzi and Wongkaew}{2015}]{borzi2015modeling}
\begin{barticle}
\bauthor{\bsnm{Borzi}, \binits{A.}},
\bauthor{\bsnm{Wongkaew}, \binits{S.}}:
\batitle{Modeling and control through leadership of a refined flocking system}.
\bjtitle{Mathematical Models and Methods in Applied Sciences}
\bvolume{25}(\bissue{02}),
\bfpage{255}--\blpage{282}
(\byear{2015})
\doiurl{10.1142/S0218202515500098}
\end{barticle}
\endbibitem

\bibitem[\protect\citeauthoryear{Bailo et~al.}{2018}]{bailo2018optimal}
\begin{barticle}
\bauthor{\bsnm{Bailo}, \binits{R.}},
\bauthor{\bsnm{Bongini}, \binits{M.}},
\bauthor{\bsnm{Carrillo}, \binits{J.A.}},
\bauthor{\bsnm{Kalise}, \binits{D.}}:
\batitle{Optimal consensus control of the cucker-smale model}.
\bjtitle{IFAC-PapersOnLine}
\bvolume{51}(\bissue{13}),
\bfpage{1}--\blpage{6}
(\byear{2018})
\doiurl{10.1016/j.ifacol.2018.07.245}
\end{barticle}
\endbibitem

\bibitem[\protect\citeauthoryear{Albi et~al.}{2020}]{albi2020mathematical}
\begin{bchapter}
\bauthor{\bsnm{Albi}, \binits{G.}},
\bauthor{\bsnm{Cristiani}, \binits{E.}},
\bauthor{\bsnm{Pareschi}, \binits{L.}},
\bauthor{\bsnm{Peri}, \binits{D.}}:
\bctitle{Mathematical models and methods for crowd dynamics control}.
In: \bbtitle{Crowd Dynamics, Volume 2: Theory, Models, and Applications},
pp. \bfpage{159}--\blpage{197}.
\bpublisher{Springer},
\blocation{Berlin, Heidelberg}
(\byear{2020}).
\doiurl{10.1007/978-3-030-50450-2_8}
\end{bchapter}
\endbibitem

\bibitem[\protect\citeauthoryear{Burger et~al.}{2020}]{burger2020instantaneous}
\begin{barticle}
\bauthor{\bsnm{Burger}, \binits{M.}},
\bauthor{\bsnm{Pinnau}, \binits{R.}},
\bauthor{\bsnm{Totzeck}, \binits{C.}},
\bauthor{\bsnm{Tse}, \binits{O.}},
\bauthor{\bsnm{Roth}, \binits{A.}}:
\batitle{Instantaneous control of interacting particle systems in the
  mean-field limit}.
\bjtitle{Journal of Computational Physics}
\bvolume{405},
\bfpage{109181}
(\byear{2020})
\doiurl{10.1016/j.jcp.2019.109181}
\end{barticle}
\endbibitem

\bibitem[\protect\citeauthoryear{Gong et~al.}{2023}]{gong2023crowd}
\begin{barticle}
\bauthor{\bsnm{Gong}, \binits{X.}},
\bauthor{\bsnm{Herty}, \binits{M.}},
\bauthor{\bsnm{Piccoli}, \binits{B.}},
\bauthor{\bsnm{Visconti}, \binits{G.}}:
\batitle{Crowd dynamics: Modeling and control of multiagent systems}.
\bjtitle{Annual Review of Control, Robotics, and Autonomous Systems}
\bvolume{6}(\bissue{1}),
\bfpage{261}--\blpage{282}
(\byear{2023})
\doiurl{10.1146/annurev-control-060822-123629}
\end{barticle}
\endbibitem

\bibitem[\protect\citeauthoryear{Albi et~al.}{2021}]{albi2021control}
\begin{barticle}
\bauthor{\bsnm{Albi}, \binits{G.}},
\bauthor{\bsnm{Pareschi}, \binits{L.}},
\bauthor{\bsnm{Zanella}, \binits{M.}}:
\batitle{Control with uncertain data of socially structured compartmental
  epidemic models}.
\bjtitle{Journal of Mathematical Biology}
\bvolume{82}(\bissue{7}),
\bfpage{63}
(\byear{2021})
\doiurl{10.1007/s00285-021-01617-y}
\end{barticle}
\endbibitem

\bibitem[\protect\citeauthoryear{Zanella}{2023}]{zanella2023kinetic}
\begin{barticle}
\bauthor{\bsnm{Zanella}, \binits{M.}}:
\batitle{Kinetic models for epidemic dynamics in the presence of opinion
  polarization}.
\bjtitle{Bulletin of Mathematical Biology}
\bvolume{85}(\bissue{5}),
\bfpage{36}
(\byear{2023})
\doiurl{10.1007/s11538-023-01147-2}
\end{barticle}
\endbibitem

\bibitem[\protect\citeauthoryear{Bondesan et~al.}{2024}]{bondesan2024kinetic}
\begin{barticle}
\bauthor{\bsnm{Bondesan}, \binits{A.}},
\bauthor{\bsnm{Toscani}, \binits{G.}},
\bauthor{\bsnm{Zanella}, \binits{M.}}:
\batitle{Kinetic compartmental models driven by opinion dynamics: vaccine
  hesitancy and social influence}.
\bjtitle{Mathematical Models and Methods in Applied Sciences}
\bvolume{34}(\bissue{06}),
\bfpage{1043}--\blpage{1076}
(\byear{2024})
\doiurl{10.1142/S0218202524400062}
\end{barticle}
\endbibitem

\bibitem[\protect\citeauthoryear{D{\"u}ring et~al.}{2024}]{during2024breaking}
\begin{barticle}
\bauthor{\bsnm{D{\"u}ring}, \binits{B.}},
\bauthor{\bsnm{Franceschi}, \binits{J.}},
\bauthor{\bsnm{Wolfram}, \binits{M.-T.}},
\bauthor{\bsnm{Zanella}, \binits{M.}}:
\batitle{Breaking consensus in kinetic opinion formation models on graphons}.
\bjtitle{Journal of Nonlinear Science}
\bvolume{34}(\bissue{4}),
\bfpage{79}
(\byear{2024})
\doiurl{10.1007/s00332-024-10060-4}
\end{barticle}
\endbibitem

\bibitem[\protect\citeauthoryear{Herty et~al.}{2007}]{herty2007instantaneous}
\begin{barticle}
\bauthor{\bsnm{Herty}, \binits{M.}},
\bauthor{\bsnm{Kirchner}, \binits{C.}},
\bauthor{\bsnm{Klar}, \binits{A.}}:
\batitle{Instantaneous control for traffic flow}.
\bjtitle{Mathematical methods in the applied sciences}
\bvolume{30}(\bissue{2}),
\bfpage{153}--\blpage{169}
(\byear{2007})
\doiurl{10.1002/mma.779}
\end{barticle}
\endbibitem

\bibitem[\protect\citeauthoryear{Carrillo
  et~al.}{2018}]{carrillo2018analytical}
\begin{barticle}
\bauthor{\bsnm{Carrillo}, \binits{J.A.}},
\bauthor{\bsnm{Choi}, \binits{Y.-P.}},
\bauthor{\bsnm{Totzeck}, \binits{C.}},
\bauthor{\bsnm{Tse}, \binits{O.}}:
\batitle{An analytical framework for consensus-based global optimization
  method}.
\bjtitle{Mathematical Models and Methods in Applied Sciences}
\bvolume{28}(\bissue{06}),
\bfpage{1037}--\blpage{1066}
(\byear{2018})
\doiurl{10.1142/S0218202518500276}
\end{barticle}
\endbibitem

\bibitem[\protect\citeauthoryear{Pinnau et~al.}{2017}]{pinnau2017consensus}
\begin{barticle}
\bauthor{\bsnm{Pinnau}, \binits{R.}},
\bauthor{\bsnm{Totzeck}, \binits{C.}},
\bauthor{\bsnm{Tse}, \binits{O.}},
\bauthor{\bsnm{Martin}, \binits{S.}}:
\batitle{A consensus-based model for global optimization and its mean-field
  limit}.
\bjtitle{Mathematical Models and Methods in Applied Sciences}
\bvolume{27}(\bissue{01}),
\bfpage{183}--\blpage{204}
(\byear{2017})
\doiurl{10.1142/S0218202517400061}
\end{barticle}
\endbibitem

\bibitem[\protect\citeauthoryear{Burger et~al.}{2014}]{bib:Burgeretal:OCPMFG}
\begin{barticle}
\bauthor{\bsnm{Burger}, \binits{M.}},
\bauthor{\bsnm{Francesco}, \binits{M.D.}},
\bauthor{\bsnm{Markowich}, \binits{P.A.}},
\bauthor{\bsnm{Wolfram}, \binits{M.-T.}}:
\batitle{Mean field games with nonlinear mobilities in pedestrian dynamics}.
\bjtitle{Discrete and Continuous Dynamical Systems - B}
\bvolume{19}(\bissue{5}),
\bfpage{1311}--\blpage{1333}
(\byear{2014})
\doiurl{10.3934/dcdsb.2014.19.1311}
\end{barticle}
\endbibitem

\bibitem[\protect\citeauthoryear{Toscani}{2006}]{bib:Toscani:kineticmodel}
\begin{barticle}
\bauthor{\bsnm{Toscani}, \binits{G.}}:
\batitle{Kinetic models of opinion formation}.
\bjtitle{Comm. Math. Sci.}
\bvolume{4}(\bissue{3}),
\bfpage{481}--\blpage{496}
(\byear{2006})
\doiurl{10.4310/CMS.2006.v4.n3.a1}
\end{barticle}
\endbibitem

\bibitem[\protect\citeauthoryear{Sznajd-Weron and
  Sznajd}{2000}]{sznajd2000opinion}
\begin{barticle}
\bauthor{\bsnm{Sznajd-Weron}, \binits{K.}},
\bauthor{\bsnm{Sznajd}, \binits{J.}}:
\batitle{Opinion evolution in closed community}.
\bjtitle{Int. J. Mod. Phys. C}
\bvolume{11}(\bissue{06}),
\bfpage{1157}--\blpage{1165}
(\byear{2000})
\doiurl{10.1142/S0129183100000936}
\end{barticle}
\endbibitem

\bibitem[\protect\citeauthoryear{Düring and Wright}{2022}]{bib:During:polling}
\begin{barticle}
\bauthor{\bsnm{Düring}, \binits{B.}},
\bauthor{\bsnm{Wright}, \binits{O.}}:
\batitle{On a kinetic opinion formation model for pre-election polling}.
\bjtitle{Phil. Trans. R. Soc. A.}
\bvolume{380}(\bissue{2224}),
\bfpage{20210154}
(\byear{2022})
\doiurl{10.1098/rsta.2021.0154}
\end{barticle}
\endbibitem

\bibitem[\protect\citeauthoryear{D{\"u}ring and Wright}{2024}]{during2024voter}
\begin{barticle}
\bauthor{\bsnm{D{\"u}ring}, \binits{B.}},
\bauthor{\bsnm{Wright}, \binits{O.}}:
\batitle{Voter demographics and socio-economic factors in kinetic models for
  opinion formation}.
\bjtitle{arXiv preprint arXiv:2412.02461}
(\byear{2024})
\doiurl{10.48550/arXiv.2412.02461}
\end{barticle}
\endbibitem

\bibitem[\protect\citeauthoryear{Bartel et~al.}{2024}]{bib:Bartel}
\begin{botherref}
\oauthor{\bsnm{Bartel}, \binits{H.}},
\oauthor{\bsnm{Lampert}, \binits{J.}},
\oauthor{\bsnm{Ranocha}, \binits{H.}}:
Structure‐preserving numerical methods for {F}okker–{P}lanck equations.
PAMM
\textbf{24}(4)
(2024)
\doiurl{10.1002/pamm.202400007}
\end{botherref}
\endbibitem

\end{thebibliography}

\end{document}